\theoremstyle{plain} % イタリック体
\newtheorem{theorem}{Theorem}[section] % 見出しはスモールキャップ
\newtheorem{lemma}[theorem]{Lemma}
\newtheorem{proposition}[theorem]{Proposition}
\newtheorem{conjecture}[theorem]{Conjecture}
\theoremstyle{definition} % ローマン体に変更
\newtheorem{remark}[theorem]{Remark}
\newcommand{\affine}{\mathbb{C}}
\newcommand{\norm}[1]{\left|\!\left|#1\right|\!\right|}
\newcommand{\supnorm}[1]{\left|\!\left|#1\right|\!\right|_{\infty}}
\newcommand{\moduli}{\mathcal{M}}
\newcommand{\Dnorm}[3]{\left|\!\left| #1 \right|\!\right|_{\mathcal{C}^{#2}(#3)}}
\newcommand{\Widim}{\mathrm{Widim}}
\newcommand{\Diam}{\mathrm{Diam}}
\newcommand{\vol}{\mathrm{vol}}
\newcommand{\dbar}{\bar{\partial}}
\newcommand{\const}{\mathrm{const}}
\begin{document}

\title[Deformation of Brody curves]{Deformation of Brody curves and mean dimension} % 論文タイトル []内は柱用

\author[Masaki Tsukamoto]{Masaki Tsukamoto$^*$} % 第一著者名 []内は柱用

%%%%%%%%%%%%%%%%%%% 脚注 %%%%%%%%%%%%%%%%%%%%%%%%%%%%%%
\subjclass[2000]{32H30}
% \subjclass[2000]コマンドが効かない(amsart.clsのバージョンが古い)場合は
% 以下を利用すること
%\renewcommand{\thefootnote}{\fnsymbol{footnote}}
%\footnote[0]{2000\textit{ Mathematics Subjet Classification}.
%Primary 00; Secondary 00.}

\keywords{Brody curve, deformation theory, mean dimension, 
the Nevanlinna theory}

\thanks{$^*$Supported by Grant-in-Aid for JSPS Fellows (19$\cdot$1530) from Japan Society for the
Promotion of Science}

\date{\today}

%%%%%%%%%%%% 著者所属 %%%%%%%%%%%%%

%\address{ % 第一著者
%Mathematical Institute \endgraf % 適当な箇所で改行
%Tohoku University \endgraf
%Sendai 980-8578 \endgraf
%Japan
%}
%\email{}

%%%%%%%%%%%%%%%%%%%%%%%%%%%%%%%%%%%%%%%%%%%%%%%%%%%%%%%

\maketitle

\begin{abstract}
The main purpose of this paper is to show that ideas of deformation theory can be 
applied to ``infinite dimensional geometry".
We develop the deformation theory of Brody curves.
Brody curve is a kind of holomorphic map from the complex plane to the 
projective space.
Since the complex plane is not compact, the parameter space of the deformation 
can be infinite dimensional.
As an application we prove a lower bound on the mean dimension of the space of 
Brody curves.
\end{abstract}

\section{Introduction}
\subsection{Main results}
Let $z=x+y\sqrt{-1}$ be the natural coordinate in the complex plane $\affine$.
For a holomorphic curve $f=[f_0:f_1:\cdots:f_N]:\affine \to \affine P^N$
with holomorphic functions $f_0, f_1: \cdots, f_N$, we define the pointwise norm 
$|df|\geq 0$ (with respect to the Fubini-Study metric) by 
\begin{equation}\label{definition of the norm of the differential}
 |df|^2 = \frac{1}{4 \pi} \Delta \log \left( |f_0|^2 +|f_1|^2+ \cdots +|f_N|^2 \right) \quad 
(\Delta := \frac{\partial^2}{\partial x^2} + \frac{\partial^2}{\partial y^2} ). 
\end{equation}
We call $f$ a Brody curve if it satisfies $|df|\leq 1$ (cf. Brody \cite{Brody}).
Let $\moduli(\affine P^N)$ be the space of Brody curves in $\affine P^N$ with 
the compact-open topology.
Then $\moduli(\affine P^N)$ becomes an infinite dimensional compact space and 
it admits a natural $\affine$-action:
\begin{equation}\label{definition of C-action}
 (f(z), a) \mapsto f(z+a) \quad \text{for a Brody curve $f(z)$ and $a\in \affine$}.
\end{equation}

This paper studies the ``mean dimension" $\dim(\moduli(\affine P^N):\affine)$.
Mean dimension is a notion defined by Gromov \cite{Gromov}
(see also Lindenstrauss-Weiss \cite{Lindenstrauss-Weiss} and Lindenstrauss \cite{Lindenstrauss}).
Mean dimension is a ``dimension of an infinite dimensional space".
Intuitively (the precise definition will be given in Section 2),
\[ ``\dim(\moduli(\affine P^N):\affine) = \dim \moduli(\affine P^N) /\vol(\affine)". \]
When we study the space of holomorphic maps from a compact Riemann surface, 
its (virtual) dimension can be derived from the deformation theory (and the index theorem).
The main purpose of this paper is to develop a new deformation theory which can be 
applied to the computation of $\dim(\moduli(\affine P^N):\affine)$.

For a Brody curve $f$ we define the Shimizu-Ahlfors characteristic function $T(r, f)$ by 
\[ T(r,f) := \int_{1}^{r}\frac{dt}{t}\int_{|z|<t} |df|^2 dxdy \leq \pi r^2/2. \]
We define the ``mean energy" $e(f)$ (see Tsukamoto \cite{T2}) by 
\[ e(f) := \limsup_{r\to \infty} \frac{2}{\pi r^2}T(r, f) \in [0,1] .\]
Let $e(\affine P^N)$ be the supremum of $e(f)$ over $f\in \moduli(\affine P^N)$.
From the definition we have $e(\affine P^N)\in [0,1]$, but actually we can prove 
(see Tsukamoto \cite{T1, T2})
\[ 0 < e(\affine P^N) <1 .\]
We call $f\in \moduli(\affine P^N)$ an elliptic Brody curve if there exists a
lattice $\Lambda\subset \affine$ such that $f(z+\lambda) = f(z)$ for all $z\in \affine$ and 
$\lambda\in \Lambda$. If $f$ is a non-constant elliptic Brody curve, then $e(f)>0$.
Let $e(\affine P^N)_{ell}$ be the supremum of $e(f)$ 
over elliptic Brody curves $f$ in $\affine P^N$.
Obviously $0< e(\affine P^N)_{ell}\leq e(\affine P^N)$.
Using the argument in Tsukamoto \cite[Section 4]{T1}, we can prove that 
$e(\affine P^N)_{ell}$ and $e(\affine P^N)$ asymptotically become equal to $1$:
\begin{equation}\label{eq: asymptotics of mean energy}
 \lim_{N\to \infty}e(\affine P^N)_{ell} = \lim_{N\to \infty} e(\affine P^N) = 1 .
\end{equation}
Our main result on the mean dimension is the following inequality:
\begin{theorem}\label{main theorem}
 \[2e(\affine P^N)_{ell} \,(N+1) \leq \dim(\moduli(\affine P^N):\affine) \leq 4 e(\affine P^N)N. \]
\end{theorem}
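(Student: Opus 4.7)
The plan is to prove the two inequalities by essentially independent arguments. The lower bound will come from a deformation-theoretic construction, exploiting the fact that perturbing an elliptic Brody curve locally yields a large, genuinely $\affine$-invariant family of Brody curves. The upper bound will come from an $\epsilon$-width estimate, controlling the number of parameters needed to describe a Brody curve on a large disc via its Shimizu--Ahlfors characteristic.

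For the lower bound, I would first pick a non-constant elliptic Brody curve $f\colon E = \affine/\Lambda \to \affine P^N$ whose mean energy $e(f)$ is close to $e(\affine P^N)_{ell}$. For such an $f$ of degree $d$, the basic computation $\int_E |df|^2\,dxdy = d$ gives $e(f) = d/|\Lambda|$, where $|\Lambda|$ is the area of a fundamental parallelogram. Viewing $f$ as a holomorphic map from the elliptic curve $E$, its infinitesimal deformations are parameterised by $H^0(E, f^*T\affine P^N)$; since $f^*T\affine P^N$ has rank $N$ and degree $d(N+1)$, Riemann--Roch on $E$ gives
\[
 h^0(E, f^*T\affine P^N) \geq \chi = d(N+1),
\]
that is, at least $2d(N+1)$ real infinitesimal deformations. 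The next step, which is the content of the paper's deformation theory, must be to promote each such infinitesimal section to an honest nearby Brody curve. Granting this, I would build a large family on $\affine$ as follows: partition a large disc $D_R$ into $\approx \pi R^2/|\Lambda|$ translates of a fundamental parallelogram of $\Lambda$, and in each one apply an independent small deformation of $f$, cut off by a bump. This yields $\approx 2d(N+1)\pi R^2/|\Lambda|$ real parameters on $D_R$. Dividing by $\vol(D_R)=\pi R^2$ and taking $e(f)\to e(\affine P^N)_{ell}$ gives $\dim(\moduli(\affine P^N):\affine)\geq 2(N+1)e(\affine P^N)_{ell}$.

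For the upper bound, I would estimate $\Widim_\epsilon$ of the restriction of $\moduli(\affine P^N)$ to a large disc $D_R$. The definition of $e(f)$ through the Shimizu--Ahlfors characteristic constrains the image $f(D_R)$ to have Fubini--Study area at most $\approx e(\affine P^N)\pi R^2/2$, counted with multiplicity. Together with the Brody derivative bound $|df|\leq 1$, a holomorphic curve of controlled area and derivative in the real $2N$-dimensional target $\affine P^N$ should fit, to within $\epsilon$, into a space of $\lesssim 4N\cdot e(\affine P^N)\cdot \pi R^2/\epsilon^2$ real parameters (the factor $2N$ from the real dimension of the target, and the density $2e(\affine P^N)$ from the energy bound). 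Dividing by $\pi R^2$ and sending first $R\to \infty$ and then $\epsilon\to 0$ gives the claimed upper bound $4e(\affine P^N)N$.

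The main obstacle will clearly be in the lower bound: the Brody condition $|df|\leq 1$ is nonlinear and not stable under arbitrary small $C^\infty$ perturbations (a generic perturbation can push $|df|$ above $1$ at points where it is already near $1$), so a naive Riemann--Roch section, cut off by a bump function, will typically violate the Brody bound near the support of the bump. The hard work will therefore be to develop a quantitative deformation theory---presumably via a fixed-point or Newton iteration in an appropriate H\"older norm, combined with a careful choice of $f$ so that $|df|$ remains strictly below $1$ on the support of the perturbations---that guarantees that small infinitesimal sections integrate to genuine Brody curves and, crucially, that independent perturbations in disjoint fundamental domains can coexist without the sum violating $|df|\leq 1$ globally. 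This is where the ``deformation theory of Brody curves'' advertised in the title does its real work.
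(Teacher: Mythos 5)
Your identification of the key ingredients is on the right track — an elliptic Brody curve $\varphi$ with mean energy near $e(\affine P^N)_{ell}$, the Riemann--Roch count $h^0 = (N+1)\deg\varphi$ on the elliptic curve, and the need for a quantitative deformation theory that preserves the Brody bound — but the mechanism you propose for the lower bound is not viable, and it is not the one the paper uses. Cutting off a holomorphic infinitesimal deformation by a bump function and then trying to glue independent pieces in disjoint fundamental parallelograms cannot work, because the cut-off section is no longer holomorphic: the resulting map $\exp(\phi\, u)\circ\tilde\varphi$ fails to be a holomorphic curve, not merely a Brody one. You correctly flag this as ``the main obstacle,'' but you leave it unresolved, and without resolving it the construction produces nothing.

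The paper's actual route avoids cutting off entirely. It constructs a single \emph{global}, $\Lambda$-equivariant deformation family: let $V$ be the Banach space of all bounded holomorphic sections of $E=\tilde\varphi^*T'\affine P^N$ over $\affine$ with the sup-norm, and prove (the ``$L^\infty$'' deformation theory of Sections 4--5; this is where the Weitzenb\"ock/Helmholtz estimate and the metric perturbation come in) that there exist $\delta, C>0$ and a $\Lambda$-equivariant map $u\mapsto f_u$ from $B_\delta=\{u\in V:\supnorm{u}\leq\delta\}$ into $\moduli(\affine P^N)$ with $f_0=\tilde\varphi$ and $C^{-1}\supnorm{u-v}\leq\sup_z\underline d(f_u(z),f_v(z))\leq C\supnorm{u-v}$. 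The many independent parameters then come not from local surgery but from restricting to the $n\Lambda$-periodic subspaces $V_n\subset V$: these are precisely $H^0(\affine/n\Lambda,\mathcal O(\varphi_n^*T'\affine P^N))$, of real dimension $2n^2(N+1)\deg\varphi$, and the sections in $V_n$ are automatically globally defined, holomorphic, and bounded — no gluing is needed. The crucial point, which replaces your ``independent perturbations in disjoint domains,'' is that the \emph{same} $\delta$ and $C$ work for every $n$, so $\Widim_\varepsilon(\moduli,d_{\Omega_n})\geq\dim_{\mathbb R}V_n$ for all $n$ and a fixed $\varepsilon<\delta/CC_1$; dividing by $n^2$ and using $\dim(X:\Lambda)=|\affine/\Lambda|\dim(X:\affine)$ gives $\dim(\moduli:\affine)\geq 2(N+1)e(\tilde\varphi)$. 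Finally, the hypothesis $|d\varphi|<1$ (strict, needed so that small deformations remain Brody) is obtained from a general elliptic Brody curve $f$ by the scaling $g(z)=f(cz)$, $c<1$, and letting $c\to1$ at the end. Your sketch of the upper bound is a reasonable heuristic for the Nevanlinna-theoretic argument, which in this paper is simply quoted from an earlier work rather than reproved.
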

This theorem has the following two consequences:
\begin{theorem}\label{theorem in N=1}
\[ 4e(\affine P^1)_{ell}\leq \dim(\moduli(\affine P^1):\affine) \leq 4e(\affine P^1).\]
\end{theorem}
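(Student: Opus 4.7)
The plan is to observe that Theorem \ref{theorem in N=1} is an immediate specialization of Theorem \ref{main theorem} to the case $N=1$. Substituting $N=1$ into the lower bound $2e(\affine P^N)_{ell}(N+1)$ of Theorem \ref{main theorem} produces $2e(\affine P^1)_{ell}\cdot 2 = 4e(\affine P^1)_{ell}$, while substituting $N=1$ into the upper bound $4e(\affine P^N)N$ produces $4e(\affine P^1)\cdot 1 = 4e(\affine P^1)$. These are precisely the two sides of the inequality in Theorem \ref{theorem in N=1}, so no new geometric or analytic input is required beyond Theorem \ref{main theorem} itself.

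Thus the entire substantive content is absorbed into the proof of Theorem \ref{main theorem}: the lower bound comes from the new deformation-theoretic construction of Brody curves advertised in the abstract, while the upper bound comes from the Nevanlinna/Shimizu--Ahlfors-type analytic estimates on $|df|$ and on $T(r,f)$. The only feature specific to $N=1$ is the numerical coincidence that the two coefficients $2(N+1)$ and $4N$ happen to agree at $N=1$; this is what produces a genuine two-sided sandwich bound that pins $\dim(\moduli(\affine P^1):\affine)$ into the interval $[4e(\affine P^1)_{ell},\,4e(\affine P^1)]$ with matching constants.

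Accordingly, there is no real obstacle at the level of this corollary. The only conceivable point that might tempt one to do extra work is the hope of upgrading Theorem \ref{theorem in N=1} to the exact formula $\dim(\moduli(\affine P^1):\affine) = 4e(\affine P^1)$; that would require the separate equality $e(\affine P^1)_{ell} = e(\affine P^1)$, presumably via an approximation-by-elliptic-curves argument in mean energy (as suggested by \eqref{eq: asymptotics of mean energy} in the limit $N\to\infty$). Since this is not part of the stated theorem, I would simply present Theorem \ref{theorem in N=1} as a one-line corollary obtained by setting $N=1$ in Theorem \ref{main theorem}.
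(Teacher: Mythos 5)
Your proposal is correct and matches the paper exactly: the paper itself states that Theorem \ref{theorem in N=1} ``is the special case of Theorem \ref{main theorem}'' obtained by setting $N=1$, which is precisely the substitution you carry out.
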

\begin{theorem}\label{theorem in N goes to infty}
\[ 2\leq \liminf_{N\to \infty}\, \dim(\moduli(\affine P^N):\affine))/N \leq 
\limsup_{N\to \infty}\, \dim(\moduli(\affine P^N):\affine)/N \leq 4 .\]
\end{theorem}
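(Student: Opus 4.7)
The plan is to deduce Theorem~\ref{theorem in N goes to infty} directly from Theorem~\ref{main theorem} combined with the mean energy asymptotics \eqref{eq: asymptotics of mean energy}; no new geometric or analytic input is needed, and the argument is essentially a bookkeeping exercise with $\liminf$ and $\limsup$.

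First I would divide the two inequalities of Theorem~\ref{main theorem} by $N$, obtaining
\[ 2\,e(\affine P^N)_{ell}\cdot\frac{N+1}{N} \;\leq\; \frac{\dim(\moduli(\affine P^N):\affine)}{N} \;\leq\; 4\,e(\affine P^N). \]
For the lower bound in Theorem~\ref{theorem in N goes to infty}, I would take $\liminf_{N\to\infty}$ on both sides of the left-hand inequality. Since $(N+1)/N \to 1$ and, by \eqref{eq: asymptotics of mean energy}, $e(\affine P^N)_{ell}\to 1$, the lower bounding expression converges to $2$. Hence
\[ \liminf_{N\to\infty}\,\frac{\dim(\moduli(\affine P^N):\affine)}{N}\;\geq\;2. \]

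For the upper bound, I would take $\limsup_{N\to\infty}$ on both sides of the right-hand inequality. Again using \eqref{eq: asymptotics of mean energy}, this time for $e(\affine P^N)$, the right-hand side converges to $4$, so
\[ \limsup_{N\to\infty}\,\frac{\dim(\moduli(\affine P^N):\affine)}{N}\;\leq\;4. \]
The middle inequality $\liminf\leq\limsup$ is of course automatic, and the three bounds together give the statement.

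The only conceivable obstacle is verifying that I am entitled to invoke \eqref{eq: asymptotics of mean energy} and Theorem~\ref{main theorem} as stated; but both are established earlier in the paper (or cited to \cite{T1,T2}), so the present theorem is a direct corollary. In particular, all the genuine difficulty — the deformation-theoretic lower bound and the Nevanlinna-theoretic upper bound that constitute Theorem~\ref{main theorem}, as well as the asymptotic saturation of mean energies — is handled elsewhere, and Theorem~\ref{theorem in N goes to infty} itself requires nothing beyond elementary manipulation of limits.
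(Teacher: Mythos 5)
Your proposal is correct and is exactly the paper's own argument: the paper simply observes that Theorem~\ref{theorem in N goes to infty} follows from Theorem~\ref{main theorem} together with the asymptotics \eqref{eq: asymptotics of mean energy}, which is what you carry out after dividing by $N$ and passing to $\liminf$ and $\limsup$. Nothing more is needed.
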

Theorem \ref{theorem in N=1} is the special case of Theorem \ref{main theorem}. 
Theorem \ref{theorem in N goes to infty} comes from (\ref{eq: asymptotics of mean energy}).
The point of Theorem \ref{theorem in N goes to infty} 
is that the estimate is explicit.
(The mean dimension $\dim(\moduli(\affine P^N):\affine)$ is a very transcendental object.)

Theorem \ref{theorem in N=1} leads us to the following conjecture
(actually a second main purpose of this paper is to propose this conjecture
to the mathematical community):
\begin{conjecture}\label{main conjecture}
\[ e(\affine P^1)_{ell} = e(\affine P^1).\]
\end{conjecture}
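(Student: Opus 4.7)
The inequality $e(\affine P^1)_{ell}\leq e(\affine P^1)$ is immediate, since elliptic Brody curves are Brody curves, so the content of the conjecture is the reverse inequality. My plan is to establish it by a periodization argument: given $\varepsilon>0$ and a Brody curve $f$ with $e(f)>e(\affine P^1)-\varepsilon$, produce an elliptic Brody curve $\tilde f$ satisfying $e(\tilde f)\geq e(f)-O(\varepsilon)$.

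First, by the definition of $e(f)$ together with a pigeonhole argument on the Shimizu--Ahlfors integral $T(r,f)$, for arbitrarily large $L$ one can locate a translate $f_a(z):=f(z+a)$ and a square $Q_L\subset \affine$ of side $L$ for which
\begin{equation*}
\frac{1}{L^2}\int_{Q_L}|df_a|^2\, dxdy > e(f)-\varepsilon ,
\end{equation*}
with controlled behaviour of the logarithmic potential $\log(|f_0|^2+|f_1|^2)$ in a neighbourhood of $\partial Q_L$, so that the divisor of $f_a$ in $Q_L$ carries almost all of the energy. Writing $f_a=[f_0:f_1]$ with coprime entire functions, let $D_0,D_\infty\subset Q_L$ be the zero divisors of $f_0,f_1$. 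A meromorphic function on the torus $\affine/\Lambda$ with $\Lambda=L\mathbb{Z}\oplus \sqrt{-1}L\mathbb{Z}$ is a quotient of Weierstrass sigma functions and corresponds to divisors $\tilde D_0,\tilde D_\infty\subset Q_L$ of equal cardinality satisfying the Abel relation $\sum\tilde D_0 \equiv \sum\tilde D_\infty \pmod{\Lambda}$. I would perturb $D_0,D_\infty$ by an amount $o(1)$ as $L\to\infty$ to obtain such $\tilde D_0,\tilde D_\infty$, and define $\tilde f$ as the associated sigma-function quotient. Its mean energy is then $|\tilde D_0|/L^2$, which should remain within $O(\varepsilon)$ of the density in $Q_L$.

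The main obstacle is verifying the pointwise Brody bound $|d\tilde f|\leq 1$. Naive periodization sums the contributions of all lattice translates of $D_0\cup D_\infty$, and these cross-terms can push $|d\tilde f|$ above $1$ even when $|df_a|\leq 1$ on $Q_L$. Rescaling $\tilde f$ to restore the bound shrinks its Fubini--Study image and costs a definite fraction of the energy, potentially more than $\varepsilon$. A workable strategy would require either a quantitative theory of how $|df|$ depends on the sigma-product representation (so that boundary corrections admit sharp estimates), or a clever choice of lattice and of the perturbed divisors so that the periodized product is automatically $1$-Lipschitz. Carrying this out while keeping the energy loss below $\varepsilon$ is the essential difficulty, and is presumably why the statement is left here as a conjecture rather than a theorem.
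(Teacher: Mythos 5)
This statement is labelled as a \emph{Conjecture} in the paper and is not proved there. The author explicitly presents it as an open, purely function-theoretic problem (and as one of the main goals of the paper to publicize it), so there is no proof in the paper to compare your attempt against; a complete argument would be new mathematics.

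With that understood, your periodization sketch is a sensible plan of attack, and you have correctly located the genuine gap. Once you replace $f_a$ on $Q_L$ by the sigma-function quotient determined by the perturbed divisors $\tilde D_0,\tilde D_\infty$, there is no reason for the pointwise bound $|d\tilde f|\leq 1$ to survive: the spherical derivative depends on the full lattice product $\prod_{\lambda\in\Lambda}\sigma(z-p-\lambda)$, not just on the divisor data inside one fundamental domain, and the cross-terms from neighbouring translates can push $|d\tilde f|$ above $1$ near $\partial Q_L$ even if $|df_a|\leq 1$ there. The obvious repair, rescaling to $\tilde f(cz)$ with $c<1$, costs a factor $c^2$ in mean energy, and nothing in the construction forces $c\to 1$ as $L\to\infty$. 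This is exactly the tension the paper itself emphasizes between the $L^\infty$ (sup-norm) nature of the Brody condition and the $L^1$ (averaged) nature of the mean energy: matching energy density by pigeonhole is soft, but a pointwise $\sup$-bound does not localize. There are also smaller issues you gloss over (equalizing $|D_0|$ and $|D_\infty|$ in $Q_L$, and controlling the Abel correction uniformly in $L$), but those are of the same $o(1)$ character and are not the essential obstruction. Your final paragraph acknowledges all of this honestly, and that self-assessment is accurate: as written this is a program, not a proof, and Conjecture~\ref{main conjecture} remains open.
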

If this is true, then we get the following (index-theorem-like) result: 
\begin{equation}\label{mean dimension formula}
 \dim(\moduli(\affine P^1):\affine) = 4e(\affine P^1) .
\end{equation}
I think this formula is (if it is true) astonishing because the definitions of 
the left-hand-side and right-hand-side of (\ref{mean dimension formula}) are very different.
(Mean dimension is a topological quantity of the space, and mean energy is defined by 
using the energy distribution of Brody curves.)
Note that Conjecture \ref{main conjecture} itself is a purely function-theoretic problem.
It does not contain a notion in the mean dimension theory.

The upper bound, $\dim(\moduli(\affine P^N):\affine) \leq 4e(\affine P^N)N$, in Theorem 
\ref{main theorem} is already proved in Tsukamoto \cite[Theorem 1.4 and 1.5]{T2} 
by using the Nevanlinna theory\footnote{For the upper bound, 
see also Gromov \cite[p. 396, (c)]{Gromov} and Tsukamoto \cite[Remark 1.6]{T2}.}.
The task of this paper is to prove the lower bound: 
$\dim(\moduli(\affine P^N):\affine) \geq 2e(\affine P^N)_{ell}(N+1)$.
In order to prove this, we will develop a deformation theory of Brody curves.
This deformation theory is a step toward the ``infinite dimensional geometry":
The parameter space of the deformation can be infinite dimensional.
(But this is very natural because the space of Brody curves is an infinite dimensional 
space.)

A technical new feature of our deformation theory is the following:
Usually we construct deformation theory within the framework of ``$L^2$-theory"
(or sometimes $L^p$-theory for $p<\infty$).
But (I think that) $L^2$-theory is not suitable for our purpose and it is better to 
construct the theory in the settings of ``$L^\infty$-theory".
(The fact that $L^\infty$ is suitable for the mean dimension theory is also 
suggested by \cite{T-banach}.
In \cite{T-banach} it is shown that the mean dimension of the unit ball in 
$\ell^p(\Gamma)$ is zero, where $1\leq p<\infty$ and $\Gamma$ is a 
finitely generated infinite amenable group.) 
But the analysis in the $L^\infty$-settings is more complicated 
than that of $L^2$, and it is the main technical task of the paper.

%%%%%%%%%%%%%%%%%%%%%%%%%%%%%%%%%%%%%%%%%%%%%%%%%%%%%%%%%%%%%%%%%%%%%%%%%%%%%%%%%%%%%%%%%%%%%%%%%%%
%%%%%%%%%%%%%%%%%%%%%%%%%%%%%%%%%%%%%%%%%%%%%%%%%%%%%%%%%%%%%%%%%%%%%%%%%%%%%%%%%%%%%%%%%%%%%%%%%%%

\subsection{Remark on Conjecture \ref{main conjecture}}
An elliptic function $f$ constructed below might be a good candidate for 
the function which attains the supremum of $e(f)$.
Actually the following $f$ is an
extremal function of the Bloch-constant-type problem solved in Bonk-Eremenko \cite{Bonk-Eremenko}.
Put
\[ e_1 := 1/\sqrt{2}, \quad e_2 :=e^{2\pi \sqrt{-1}/3}/\sqrt{2}, \quad 
e_3 := e^{4\pi \sqrt{-1}/3}/\sqrt{2}, \quad
e_4 := \infty .\]
These four points become the vertices of a regular tetrahedron inscribed in 
the Riemann sphere $S^2 = \affine P^1$.
Let $\omega_1$ be a positive real number (which will be fixed later)
 and set $\omega_2 := \omega_1 \exp(\pi\sqrt{-1}/3)$.
Let $\Delta\subset \affine $ be the regular triangle whose vertices are $0$, $\omega_1$, $\omega_2$
and, $\tilde{\Delta}\subset \affine P^1$ the spherical regular triangle whose vertices are 
$e_1$, $e_2$, $e_4$.
From the Riemann mapping theorem there exists an (unique) one-to-one holomorphic map 
$f:\Delta \to \tilde{\Delta}$ which sends $0$, $\omega_1$, $\omega_2$ to $e_1$, $e_4$,
$e_2$ respectively.
From the reflection principle, 
$f$ can be extended to an elliptic function whose period lattice is 
$\Lambda := \mathbb{Z}(2\omega_1) \oplus \mathbb{Z}(2\omega_2)\subset \affine$.
The set of critical points of $f$ is $\mathbb{Z}\omega_1 + \mathbb{Z}\omega_2\subset \affine$,
and the critical values are $e_1$, $e_2$, $e_3$, $e_4$. 
We have $\deg(f:\affine/\Lambda \to \affine P^1) = 2$.
$f$ satisfies 
\[ (f')^2 = K (f-e_1)(f-e_2)(f-e_3) = K (f^3 - 1/\sqrt{8}) \]
for some positive constant $K$. 
$\omega_1$ can be derived from $K$ by
\[ \omega_1 = \frac{1}{\sqrt{K}}\int_{1/\sqrt{2}}^{\infty}\frac{dx}{\sqrt{x^3 -1/\sqrt{8}}}
= \frac{2^{1/4}}{\sqrt{K}}\int_1^\infty \frac{dx}{\sqrt{x^3-1}} .\]
The spherical derivative $|df|(z)$ defined in (\ref{definition of the norm of the differential})
 is given by 
\[ |df|^2 = \frac{1}{\pi}\frac{|f'|^2}{(1+ |f|^2)^2} =
\frac{K}{\pi}\frac{|f^3-1/\sqrt{8}|}{(1+ |f|^2)^2} .\]
Some calculation shows
\[ \sup_{z\in\affine}\frac{|z^3-1/\sqrt{8}|}{(1+|z|^2)^2} = 1/\sqrt{8} .\]
Therefore
\[ \sup_{z\in \affine}|df|^2(z) =\frac{K}{\pi\sqrt{8}} .\]
We choose $\omega_1$ so that $K = \pi\sqrt{8}$. 
Then $\sup_{z\in \affine}|df|(z) = 1$ and $f$ becomes an elliptic Brody curve.
Since the volume of the fundamental domain of $\Lambda$ in $\affine$ is given by 
$|\affine/\Lambda| = 2\sqrt{3}\omega_1^2$, we have
\[ e(f) = \frac{2}{|\affine /\Lambda|}  = \frac{2\pi}{\sqrt{3}} 
\left( \int_1^\infty \frac{dx}{\sqrt{x^3-1}}\right)^{-2}
= 0.6150198678198\cdots . \]
From Theorem \ref{theorem in N=1},
\[ \dim(\moduli(\affine P^1):\affine) \geq \frac{8\pi}{\sqrt{3}}
\left( \int_1^\infty \frac{dx}{\sqrt{x^3-1}}\right)^{-2}
=2.460079471279\cdots . \]
This inequality might be an equality.

%%%%%%%%%%%%%%%%%%%%%%%%%%%%%%%%%%%%%%%%%%%%%%%%%%%%%%%%%%%%%%%%%%%%%%%%%%%%%%%%%%%%%%%%%%%%%%
%%%%%%%%%%%%%%%%%%%%%%%%%%%%%%%%%%%%%%%%%%%%%%%%%%%%%%%%%%%%%%%%%%%%%%%%%%%%%%%%%%%%%%%%%%%%%%

\subsection{Remark on residual dimension}
We want to remark about the ``residual dimension" introduced by 
Gromov (see \cite[p. 330 and p. 346]{Gromov}).
This subsection is logically independent of the proof of Theorem \ref{main theorem},
and readers can skip it.
(But the idea of this subsection is implicitly used in Section 3.)
Let $\Lambda\subset \affine$ be a lattice and 
$\moduli(\affine P^N)_\Lambda$ be the set of Brody curves $f$ satisfying 
$f(z+\lambda) = f(z)$ for all $\lambda\in \Lambda$.
$\moduli(\affine P^N)_\Lambda$ is the set of fixed-points of the natural action 
of $\Lambda$ on $\moduli(\affine P^N)$.
In other words, $\moduli(\affine P^N)_\Lambda$ is the space of holomorphic maps 
$f:\affine /\Lambda\to \affine P^N$ satisfying $|df|\leq 1$.
The usual deformation theory gives (cf. Section 3)
\[ \frac{1}{|\affine/\Lambda|}\dim \moduli(\affine P^N)_\Lambda \leq 
2(N+1)\sup_{f\in \moduli(\affine P^N)_\Lambda} e(f) \leq 2e(\affine P^N)_{ell}(N+1) .\]
In particular, Theorem \ref{main theorem} gives 
\begin{equation}\label{eq: resdim < meandim}
 \begin{split}
 \mathrm{resdim}(\moduli(\affine P^N): \{n\Lambda\}_{n\geq 1}) &:= 
 \liminf_{n\to \infty}\frac{1}{|\affine/n\Lambda|}\dim \moduli(\affine P^N)_{n\Lambda} ,\\
 &\leq  2e(\affine P^N)_{ell}(N+1) \leq \dim(\moduli (\affine P^N): \affine) .
 \end{split}
\end{equation}
Moreover some consideration shows 
\[ \sup_{\Lambda\subset \affine}\mathrm{resdim}(\moduli(\affine P^N) : \{n\Lambda\}_{n\geq 1})
= 2e(\affine P^N)_{ell} (N+1) ,\]
where $\Lambda$ runs over all lattices in $\affine$.
\begin{remark}
In (\ref{eq: resdim < meandim}) the residual dimension is not bigger than the mean dimension.
But in general residual dimension can be bigger than mean dimension;
Consider the natural action of $\mathbb{Z}$ on $[0,1]^{\mathbb{Z}}$.
For $n\geq 1$ let $F_n\subset [0,1/n]^{\mathbb{Z}}$ be the set of fixed-points
of the action of $n\mathbb{Z}$ on $[0,1/n]^{\mathbb{Z}}$.
Set $X:= \bigcup_{n\geq 1}F_n$.
$X$ becomes a $\mathbb{Z}$-invariant closed set in $[0,1]^{\mathbb{Z}}$.
Let $X_n$ $(n\geq 1)$ be the set of fixed-points
of the action of $n\mathbb{Z}$ on $X$.
Since $F_n\subset X_n$, we have $\dim X_n = n$. 
Therefore, 
$\mathrm{resdim}(X:\{n\mathbb{Z}\}_{n\geq 1}) := \liminf_{n\to \infty} \dim X_n/n = 1$.
On the other hand, it is not difficult to see $\dim(X:\mathbb{Z}) = 0$.
\end{remark}

\subsection{Remark: twisted-elliptic Brody curves}
For a Brody curve $f:\affine \to \affine P^N$, we call $f$ 
a twisted-elliptic Brody curve if there exist a lattice $\Lambda\subset \affine$ and 
a homomorphism (of groups) $\phi: \Lambda\to PU(N+1)$ such that 
\[ f(z +\lambda) = \phi(\lambda) f(z) \quad \text{for all $z\in \affine$ and $\lambda\in\Lambda$} .\]
Note that the projective unitary group $PU(N+1)$ is the holomorphic-isometry group of $\affine P^N$.
Perhaps it might be able to apply the methods in this paper to twisted-elliptic Brody curves also.
I think this is a natural generalization.
But I don't know whether this improves the estimate of the mean dimension or not.
So I don't study this case in this paper.
If there is a reader who has an interest in this case, please pursue it.

\subsection{Organization of the paper}
In Section 2 we review the definition and basic properties of mean dimension.
In Section 3 we prove Theorem \ref{main theorem}, assuming 
an analytic result about the ``deformation theory of Brody curves" proved in Section 5.
Section 4 is a preparation for Section 5. 
In Section 5 we develop the deformation theory of Brody curves and complete 
the proof of Theorem \ref{main theorem}.
We give a remark about 
Gromov's conjecture on rational curves and mean dimension in Section 6.

\subsection{Acknowledgement}
I wish to thank Professors Minoru Murata and Yoshio Tsutsumi.
They gave me several helpful advices on elliptic partial differential equations.
Especially I learned the basic idea of the proof of Proposition 4.2 from 
Professor Minoru Murata.
I also wish to thank Professor Katsutoshi Yamanoi for various valuable discussions.
He gave me an important suggestion about Conjecture \ref{main conjecture}.

%%%%%%%%%%%%%%%%%%%%%%%%%%%%%%%%%%%%%%%%%%%%%%%%%%%%%%%%%%%%%%%%%%%%%%%%%%%%%%%%%%%%%%%%%%%%%%
%%%%%%%%%%%%%%%%%%%%%%%%%%%%%%%%%%%%%%%%%%%%%%%%%%%%%%%%%%%%%%%%%%%%%%%%%%%%%%%%%%%%%%%%%%%%%%

\section{Review of mean dimension}
We review the definitions of mean dimension. For the detail, 
see Gromov \cite{Gromov} or Lindenstrauss-Weiss \cite{Lindenstrauss-Weiss}.
Let $(X, d)$ be a compact metric space, $Y$ a topological space.
For $\varepsilon >0$, a continuous map $f:X\to Y$ is called an 
$\varepsilon$-embedding if we have $\mathrm{Diam} f^{-1}(y) \leq \varepsilon$ for all 
$y\in Y$.
Let $\Widim_\varepsilon (X, d)$ be the minimum number $n\geq 0$ such that 
there are an $n$-dimensional polyhedron $K$ and an $\varepsilon$-embedding from 
$X$ to $K$.
The following is proved in Gromov \cite[p. 333]{Gromov}.
(This is a basic result for us. So we will give its proof in Appendix.)
\begin{proposition}\label{prop: widim inequality}
Let $(V, \norm{\cdot})$ be an $n$-dimensional normed linear space (over $\mathbb{R}$).
Let $B\subset V$ be the closed ball of radius $r>0$ with the distance $d(x, y) := \norm{x-y}$.
Then 
\[ \Widim_\varepsilon (B, d) = n \quad \text{for all $\varepsilon <r$} .\]
\end{proposition}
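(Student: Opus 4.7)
The upper bound $\Widim_\varepsilon(B,d)\le n$ is immediate: the identity $\mathrm{id}_B$ has singleton fibers, so it is an $\varepsilon$-embedding for every $\varepsilon$, and $B$ is homeomorphic to the closed $n$-disk, an $n$-dimensional polyhedron. The substance of the proposition is the lower bound. My plan is to argue by contradiction: assume $f:B\to K$ is an $\varepsilon$-embedding with $\dim K\le n-1$ and $\varepsilon<r$, and construct an auxiliary continuous map $\phi:B\to V$ that is simultaneously close to the identity and factors through a polyhedron of dimension at most $n-1$. A Brouwer-fixed-point argument will force $\phi(B)$ to contain a full $n$-ball, contradicting the dimension bound.

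\textbf{Construction of $\phi$.} A standard compactness argument first gives: for every $\eta>0$, every set $W\subset K$ of sufficiently small diameter satisfies $\Diam f^{-1}(W)\le\varepsilon+\eta$ (otherwise one extracts a subsequence yielding a fiber of diameter $>\varepsilon$). Fix $\eta$ with $\varepsilon':=\varepsilon+\eta<r$. Using $\dim K\le n-1$, choose a finite open cover $\{V_i\}_{i=1}^m$ of $K$ of mesh small enough that $\Diam f^{-1}(V_i)\le\varepsilon'$ and of order at most $n$. Pick $b_i\in f^{-1}(V_i)\subset B$ and a partition of unity $\{\rho_i\}$ subordinate to $\{V_i\}$, and set
\[ g(y):=\sum_{i=1}^m \rho_i(y)\,b_i, \qquad \phi:=g\circ f. \]
If $\rho_i(f(x))>0$ then $x\in f^{-1}(V_i)$, so $\norm{x-b_i}\le\varepsilon'$; since the weights sum to $1$, the convex-combination estimate gives $\norm{\phi(x)-x}\le\varepsilon'$ for all $x\in B$. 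Moreover $g$ factors through the nerve of $\{V_i\}$, a simplicial complex of dimension $\le\mathrm{order}-1\le n-1$, with the second map affine on each simplex. Hence $\phi(B)\subset g(K)$ is contained in a finite union of affine simplices of dimension $\le n-1$ in $V$.

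\textbf{Brouwer step.} For any $p\in V$ with $\norm{p}\le r-\varepsilon'$, the map $\psi_p(x):=x+p-\phi(x)$ satisfies $\norm{\psi_p(x)-p}=\norm{x-\phi(x)}\le\varepsilon'$, so $\psi_p$ sends $B$ into the closed ball of radius $\varepsilon'$ about $p$, which lies inside $B$. Brouwer's fixed-point theorem produces $x^*$ with $\psi_p(x^*)=x^*$, i.e.\ $\phi(x^*)=p$. Hence $\phi(B)\supset\{p\in V:\norm{p}\le r-\varepsilon'\}$, a closed ball of positive radius in $V$, which has topological dimension $n$. Combined with the previous paragraph this gives $n\le\dim\phi(B)\le n-1$, the desired contradiction.

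\textbf{Main obstacle.} The heart of the argument is engineering the auxiliary map $\phi$ so that it is both $\varepsilon'$-close to the identity \emph{and} has image of dimension $\le n-1$ in $V$; these properties are in tension, and the nerve/partition-of-unity construction with an order-$\le n$ cover (available precisely because $\dim K\le n-1$) is what delivers both. The Brouwer step and the convex-combination estimate are routine, as is the preliminary compactness lemma relating cover mesh on $K$ to preimage diameters on $B$.
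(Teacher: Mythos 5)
Correct, and essentially the same approach as the paper: both construct a continuous map $\phi:B\to V$ that is close to the identity and whose image lies in a finite union of simplices of dimension at most $n-1$ (built from a partition of unity subordinate to an order-$\le n$ cover arising from an $\varepsilon$-embedding), and then invoke Brouwer's fixed-point theorem. Your Brouwer step deduces surjectivity of $\phi$ onto a small concentric ball directly, while the paper instead picks a point $a$ missed by the low-dimensional image and produces a fixed-point-free map $B\to\partial B$; these are routine variants of the same no-retraction argument.
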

Suppose the Lie group $\affine$ continuously acts on the compact metric space $X$.
For any positive number $R$, we define the distance $d_R(\cdot, \cdot)$ on $X$ by 
\[ d_R(p, q) := \sup_{z\in \affine,\, |z|\leq R}d(z.p, z.q) \quad \text{for $p, q\in X$} .\]
Set 
\[ \Widim_\varepsilon (X:\affine) := 
\lim_{R\to \infty} \frac{1}{\pi R^2} \Widim_\varepsilon (X, d_R) .\]
This limit always exists (see Gromov \cite[pp. 335-338]{Gromov} and Lindenstrauss-Weiss 
\cite[Appendix]{Lindenstrauss-Weiss}).
We define the mean dimension $\dim(X:\affine )$ by setting 
\[ \dim(X:\affine):= \lim_{\varepsilon \to 0}\Widim_\varepsilon (X: \affine) .\]
$\dim(X:\affine)$ is a topological invariant, i.e., it does not depend on the given distance $d$.

Let $\Lambda = \mathbb{Z}\omega_1 \oplus \mathbb{Z}\omega_2\subset \affine$ be a lattice
($\omega_1, \omega_2 \in \affine$).
Then $\Lambda$ also acts on $X$ and we can define the mean dimension 
$\dim(X:\Lambda)$ as follows: For any positive integer $n$ we set 
\begin{equation}\label{def: Omega_n}
 \Omega_n := \{ x\omega_1 + y\omega_2 \in \Lambda |\, x, y\in \mathbb{Z},\, 0\leq x, y\leq n-1 \}
\end{equation}
We define the distance $d_{\Omega_n}(\cdot, \cdot)$ on $X$ by 
\begin{equation}\label{def: d_Omega_n}
 d_{\Omega_n}(p, q):= \max_{z\in \Omega_n} d(z.p, z.q) \quad \text{for $p, q\in X$} .
\end{equation}
Set (the following limit always exists)
\[ \Widim_\varepsilon (X:\Lambda) := \lim_{n\to \infty}\frac{1}{n^2} 
\Widim_\varepsilon (X, d_{\Omega_n}).\]
We define the mean dimension $\dim(X:\Lambda)$ by 
\[ \dim(X:\Lambda) := \lim_{\varepsilon\to 0}\Widim_\varepsilon (X:\Lambda) .\]
The following gives the relation between $\dim(X:\affine)$ and $\dim(X:\Lambda)$. 
(This is given in Gromov \cite[p.329]{Gromov} and
 Lindenstrauss-Weiss \cite[Proposition 2.7]{Lindenstrauss-Weiss}. 
For its proof, see also Tsukamoto \cite[Proposition 4.5]{T2}.)
\begin{proposition}\label{prop: relation between lambda and affine}
\[ \dim(X:\Lambda) = |\affine/\Lambda| \dim(X:\affine) ,\]
where $|\affine/\Lambda|$ denotes the volume the fundamental domain of $\Lambda$ in $\affine$.
\end{proposition}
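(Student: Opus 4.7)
I compare the two exhaustion schemes---continuous balls $B_R$ for the $\affine$-action and discrete parallelograms $\Omega_n$ for the $\Lambda$-action---by passing through a common intermediate $F_n := \Omega_n + D$, where $D$ is a fundamental domain of $\Lambda$ containing $0$. The sets $F_n$ are F{\o}lner for $\affine$ with area $n^2|\affine/\Lambda|$, and $\Omega_n$ is $\kappa$-dense in $F_n$ for $\kappa := \Diam D$.

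The first two steps are: (1) establish F{\o}lner convergence for the $\affine$-action, $\lim_{n\to\infty}\Widim_\varepsilon(X, d_{F_n})/|F_n| = \Widim_\varepsilon(X:\affine)$, via the subadditivity $\Widim_\varepsilon(X,\max(d_1,d_2))\leq\Widim_\varepsilon(X,d_1)+\Widim_\varepsilon(X,d_2)$ and the translation-invariance $\Widim_\varepsilon(X,d_{A+z})=\Widim_\varepsilon(X,d_A)$ (applied to standard tilings between $B_R$ and translates of $F_n$); (2) bridge the discrete and continuous metrics using uniform continuity of the $\affine$-action on the compact space $X$, which yields a modulus $\delta(\eta)\to 0$ as $\eta\to 0$ with $|w|<\eta\Rightarrow d(w.p,p)<\delta(\eta)$ uniformly in $p\in X$. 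The $\kappa$-density of $\Omega_n$ in $F_n$ then gives
\[
\Widim_{\varepsilon+2\delta(\kappa)}(X, d_{F_n}) \;\leq\; \Widim_\varepsilon(X, d_{\Omega_n}) \;\leq\; \Widim_\varepsilon(X, d_{F_n}),
\]
so dividing by $n^2$, taking $n\to\infty$ and invoking (1) produces
\[
|\affine/\Lambda|\,\Widim_{\varepsilon+2\delta(\kappa)}(X:\affine) \;\leq\; \Widim_\varepsilon(X:\Lambda) \;\leq\; |\affine/\Lambda|\,\Widim_\varepsilon(X:\affine).
\]

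The upper bound immediately yields $\dim(X:\Lambda) \leq |\affine/\Lambda|\dim(X:\affine)$ on letting $\varepsilon\to 0$. The matching lower bound is the delicate part: the loss $2\delta(\kappa)$ is a fixed positive number (depending on $\Lambda$) and does not vanish as $\varepsilon\to 0$. To repair this I repeat the whole construction for the sublattice refinements $\Lambda_m := \tfrac{1}{m}\Lambda$, whose fundamental domains have diameter $\kappa/m\to 0$; the decomposition $\Omega_{nm}^{\Lambda_m} = \bigsqcup_{\alpha,\beta=0}^{m-1}(\alpha\omega_1/m + \beta\omega_2/m + \Omega_n^\Lambda)$ together with subadditivity and action-invariance provides the comparison between $\Widim_\varepsilon(X:\Lambda)$ and $\Widim_\varepsilon(X:\Lambda_m)$ needed to convert the $\Lambda_m$-sandwich into a $\Lambda$-sandwich with $2\delta(\kappa/m)$ in place of $2\delta(\kappa)$. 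Letting $m\to\infty$ and then $\varepsilon\to 0$ closes the estimate and gives the equality. The main obstacle is precisely this last discretization-removal step: the naive sandwich from (2) alone does not close in the $\varepsilon\to 0$ limit, and the lattice-refinement trick is the essential device.
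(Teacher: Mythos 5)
The paper does not actually prove this proposition: it is stated as a known fact with citations to Gromov, to Lindenstrauss--Weiss (Proposition 2.7), and to Tsukamoto (T2, Proposition 4.5) for a proof, so there is no in-text argument to compare against. That said, your outline is sound: the sandwich
\[
\Widim_{\varepsilon+2\delta(\kappa)}(X,d_{F_n}) \;\le\; \Widim_\varepsilon(X,d_{\Omega_n}) \;\le\; \Widim_\varepsilon(X,d_{F_n})
\]
is correct (the left inequality follows by interpolating each $w\in F_n$ as $\lambda+r$ with $\lambda\in\Omega_n$, $|r|\le\kappa$, plus the triangle inequality and equicontinuity of the $\affine$-action), the Følner convergence in step (1) is exactly where the Ornstein--Weiss subadditivity argument enters, and you have correctly diagnosed the genuine obstacle: the discretization penalty $2\delta(\kappa)$ is a fixed positive gap that does not vanish with $\varepsilon$, so the naive sandwich only gives $|\affine/\Lambda|\,\Widim_{2\delta(\kappa)}(X:\affine)\le\dim(X:\Lambda)$, which is weaker than what is wanted. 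Your refinement argument does close it: pushing through $\Lambda_m=\tfrac1m\Lambda$, the index formula $\Widim_\varepsilon(X:\Lambda)=m^2\,\Widim_\varepsilon(X:\Lambda_m)$ (via the decomposition of $\Omega_{nm}^{\Lambda_m}$ into $m^2$ translates of $\Omega_n^\Lambda$, subadditivity, and monotonicity) lets you first send $\varepsilon\to 0$ to get $|\affine/\Lambda|\,\Widim_{2\delta(\kappa/m)}(X:\affine)\le\dim(X:\Lambda)$ and then send $m\to\infty$.

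One remark worth making: there is a standard device that dissolves the discretization error entirely and avoids the lattice refinement. Replace the metric $d$ by $d'(p,q):=d_D(p,q)=\sup_{z\in D}d(z.p,z.q)$, which is a compatible metric on $X$ because $D$ is compact and the action is continuous. Then $d'_{\Omega_n}=d_{\Omega_n+D}=d_{F_n}$ \emph{exactly}, so
\[
\Widim_\varepsilon(X,d'_{\Omega_n})=\Widim_\varepsilon(X,d_{F_n}),
\]
and since mean dimension is independent of the choice of compatible metric, dividing by $n^2$, sending $n\to\infty$ (this is still the Følner/Ornstein--Weiss step), and then $\varepsilon\to 0$ yields $\dim(X:\Lambda)=|\affine/\Lambda|\dim(X:\affine)$ directly, without any $\delta(\kappa)$ loss or refinement of $\Lambda$. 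Both routes are valid; the metric-change route is the more economical way to present the result, but your identification of the failure mode of the naive comparison and of the need for a device to remove it is exactly the right diagnosis.
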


%%%%%%%%%%%%%%%%%%%%%%%%%%%%%%%%%%%%%%%%%%%%%%%%%%%%%%%%%%%%%%%%%%%%%%%%%%%%%%%%%%%%%%%%%%%%%%%%%%
%%%%%%%%%%%%%%%%%%%%%%%%%%%%%%%%%%%%%%%%%%%%%%%%%%%%%%%%%%%%%%%%%%%%%%%%%%%%%%%%%%%%%%%%%%%%%%%%%%

\section{Proof of Theorem \ref{main theorem}}\label{Section: Proof of the main Theorem}
Let $\Lambda\subset \affine$ be a lattice and $\pi:\affine \to \affine /\Lambda$ be 
the natural projection.
Let $\varphi :\affine/\Lambda\to \affine P^N$ be a non-constant holomorphic map 
satisfying $|d\varphi| <1$, and
set $\tilde{\varphi}:= \varphi\circ\pi :\affine \to \affine P^N$.
We have 
\begin{equation*}
 e(\tilde{\varphi}) = \deg\varphi /|\affine /\Lambda | ,
\end{equation*}
where $\deg \varphi = \langle c_1(\varphi^*\mathcal{O}(1)), [\affine /\Lambda]\rangle$.
Let $T'\affine P^N$ be the holomorphic tangent bundle of $\affine P^N$ and 
consider its pull-back $E:= \tilde{\varphi}^* T'\affine P^N$ over $\affine$.
$E$ is equipped with the Hermitian metric induced by the Fubini-Study metric.
We define a Banach space $V$ as the space of bounded holomorphic sections of $E$
with the sup-norm $\supnorm{\cdot}$:
\begin{equation}\label{definition of the tangent space}
 V:=\{u:\affine \to E|\, \text{$u$ is a holomorphic section and satisfies 
$\supnorm{u}:=\sup_{z\in \affine}|u(z)| <\infty$}\}.
\end{equation}
The following result is the keystone of the proof of Theorem \ref{main theorem}.
\begin{proposition}\label{prop: main proposition}
There are positive numbers $\delta$ and $C$ such that 
for any $u\in V$ with $\supnorm{u}\leq \delta$ there exists a Brody curve
$f_u :\affine \to \affine P^n$ satisfying the following:

(i) $f_0 = \tilde{\varphi}$.

(ii) The map $B_\delta\ni u\mapsto f_u\in \moduli(\affine P^N)$ is $\Lambda$-equivariant.
Here $B_\delta =\{u\in V| \supnorm{u}\leq \delta\}$ and
we have considered the natural $\Lambda$-action on $E$ and $V$.

(iii) For any $u, v\in V$ with $\supnorm{u}, \supnorm{v}\leq \delta$, we have
\[ C^{-1}\supnorm{u-v} \leq \sup_{z\in \affine} \underline{d}(f_u(z), f_v(z)) \leq 
C \supnorm{u-v} ,\]
where $\underline{d}(\cdot, \cdot)$ denotes the distance on 
$\affine P^N$ defined by the Fubini-Study metric.
\end{proposition}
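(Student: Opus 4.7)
The plan is to build $f_u$ as the projective class of a perturbation of a homogeneous lift of $\tilde{\varphi}$, doing the analysis uniformly in $L^\infty$ on all of $\affine$. Since $\affine$ is Stein and $\tilde{\varphi}$ has no base points, fix a global holomorphic lift $F=(F_0,\ldots,F_N)\colon \affine\to\affine^{N+1}\setminus\{0\}$ with $\tilde{\varphi}=[F_0:\cdots:F_N]$. By the $\Lambda$-periodicity of $\tilde{\varphi}$ we have $F(z+\lambda)=\chi_\lambda(z)F(z)$ for nowhere-vanishing automorphic multipliers $\chi_\lambda$, since the line bundle $L:=\tilde{\varphi}^*\mathcal{O}(1)$ is generally nontrivial on $\affine/\Lambda$. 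Pulling back the Euler sequence yields $0\to\mathcal{O}\to L^{\oplus(N+1)}\to E\to 0$, so a holomorphic section $u\in V$ is represented by a tuple $g=(g_0,\ldots,g_N)$ of sections of $L$ modulo multiples of $F$, and the candidate deformation is $f_u:=[F_0+g_{u,0}:\cdots:F_N+g_{u,N}]$ for a carefully chosen representative $g_u$ of $u$.

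The technical heart of the argument (carried out in Sections~4--5) is to produce, for each $u\in V$, a holomorphic representative $g_u$ satisfying the pointwise comparison $|g_u(z)|\leq C\supnorm{u}\,|F(z)|$ on $\affine$, depending continuously and $\Lambda$-equivariantly on $u$. Given such a lift, item~(i) is the trivial case $u=0$ and item~(ii) follows because the construction is canonical. For the Brody condition in the definition of $f_u$, the strict inequality $|d\varphi|<1$ on the compact quotient $\affine/\Lambda$ gives a uniform gap $|d\tilde{\varphi}|\leq 1-\varepsilon_0$, and one expands
\[|df_u|^2=|d\tilde{\varphi}|^2+\tfrac{1}{4\pi}\Delta\log\Bigl(1+\tfrac{2\operatorname{Re}\langle F,g_u\rangle+|g_u|^2}{|F|^2}\Bigr).\]
Uniform Cauchy estimates on the ``ratio'' $g_u/F$, available thanks to the $\Lambda$-periodicity of the Fubini--Study data on $E$ and a uniform local trivialization, control its first and second derivatives in sup-norm, so the correction term is $O(\supnorm{u})$ and is absorbed into $\varepsilon_0$ for $\delta$ small. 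The bi-Lipschitz estimate~(iii) then follows because, pointwise in $z$, the correspondence $u(z)\mapsto f_u(z)$ is modelled, up to an $O(\supnorm{u}^2)$ correction, on the exponential map of the Fubini--Study metric at $\tilde{\varphi}(z)$, which is a uniform bi-Lipschitz equivalence in a small neighborhood of the zero section of $E$.

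The main obstacle is the $L^\infty$ production of the lift $g_u$: on the non-compact base $\affine$ one must exhibit, equivariantly under $\Lambda$, a holomorphic representative of each $u\in V$ with sharp pointwise control against the comparison function $|F|$. Existence of \emph{some} holomorphic lift is trivial since $H^1(\affine,\mathcal{O})=0$, but the natural pointwise orthogonal representative $g(z)-\langle g(z),F(z)\rangle F(z)/|F(z)|^2$ is only smooth, and to holomorphize it while preserving the sup-norm bound one needs an $L^\infty$ analogue of Hörmander's $\dbar$-estimate that is uniform on $\affine$ and respects the $\Lambda$-action. Unlike the familiar $L^2$ setting, the $L^\infty$ analogues are delicate (the Bergman projection fails to be bounded in $L^\infty$), and this is precisely the analysis that the author isolates as ``the deformation theory of Brody curves in the $L^\infty$ setting.''
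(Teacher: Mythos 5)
Your route is genuinely different from the paper's: you work in homogeneous coordinates and try to realize a deformation of $\tilde\varphi$ as $[F+g_u]$ via the pulled-back Euler sequence $0\to\mathcal{O}\to L^{\oplus(N+1)}\to E\to 0$, whereas the paper sets up the McDuff--Salamon map $\Phi(u)=P_u(\dbar\exp u)\otimes d\bar z$ on the exponential chart and produces a $\Lambda$-equivariant right inverse $Q=\dbar^*(\dbar\dbar^*)^{-1}$ of $\dbar$ acting between H\"older spaces. The step you defer --- producing a bounded holomorphic lift $g_u$ of $u$ with $|g_u|\le C\supnorm{u}\,|F|$, equivariantly --- is where the route breaks, and not merely for want of an $L^\infty$ H\"ormander estimate. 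The $\dbar$-problem that corrects the pointwise-orthogonal smooth lift lands in the kernel line $\mathcal{O}\cdot F\subset L^{\oplus(N+1)}$, a degree-zero bundle: already on the compact quotient the coboundary $H^0(\affine/\Lambda,E)\to H^1(\affine/\Lambda,\mathcal{O})\cong\affine$ is surjective once $\deg L\gg 0$ (indeed your own Euler long exact sequence combined with $H^1(L_n^{\oplus(N+1)})=0$ forces it), so a generic $\Lambda$-invariant $u\in V_n$ admits \emph{no} $\Lambda$-equivariant holomorphic lift $g_u$ at all, bounded or not. Moreover, the induced metric $|F|^2$ on $\mathcal{O}\cdot F$ satisfies $\Delta\log|F|^2=4\pi|d\tilde\varphi|^2\ge 0$, so its curvature is \emph{non-positive}; there is no positive Weitzenb\"ock term to feed a maximum-principle or H\"ormander argument, in contrast with $E=\tilde\varphi^*T'\affine P^N$ where the positivity of the Fubini--Study bisectional curvature (sharpened to a uniform lower bound by the perturbation of Lemma~\ref{lemma: perturbation of a Hermitian metric}) is precisely what Proposition~\ref{prop: L^infty-estimate} exploits via the Helmholtz comparison function.

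So the issue is not simply that the $L^\infty$ analysis is ``delicate'' and can be outsourced to the author's Sections 4--5: the paper's $L^\infty$ machinery applies to $\dbar\dbar^*$ on $\Omega^{0,1}(E)$ where curvature positivity holds, while your reformulation shunts the same difficulty into a rank-one $\dbar$-problem where that positivity is structurally absent and where even the cohomological obstruction is nonzero. If you want the homogeneous-coordinate picture, you would need some independent mechanism to kill the $H^1(\affine/\Lambda,\mathcal{O})$ obstruction (e.g.\ restricting to a codimension-one subspace of $V$, which would cost the factor $N+1$ in the final count) and then still confront a $\dbar$-problem with wrong-sign curvature. The less seductive but working route is the one in the paper: stay with $E$, perturb the metric to get uniform positivity, prove $\dbar\dbar^*$ invertible in $\mathcal{C}^{k,\alpha}$, and invoke the implicit function theorem for the Cauchy--Riemann map.
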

We will prove this proposition in Section 5 by constructing a ``deformation theory" of $\tilde{\varphi}$.
(Each $f_u$ is a ``small deformation" of $\tilde{\varphi}$.)
Here we prove Theorem \ref{main theorem}, assuming Proposition \ref{prop: main proposition}.
\begin{proof}[Proof of Theorem \ref{main theorem}]
To begin with, we define the distance $d(\cdot, \cdot)$ on $\moduli(\affine P^N)$ by
\[ d(f, g) := \sum_{n \geq 1} 2^{-n}\sup_{|z|\leq n} \underline{d}(f(z), g(z)) \quad 
\text{for $f, g\in \moduli(\affine P^N)$} .\]
Let $\Lambda = \mathbb{Z} \omega_1 \oplus \mathbb{Z}\omega_2\subset \affine$ be a lattice
in $\affine$ ($\omega_1, \omega_2 \in \affine$). For any positive integer $n$ we set 
\begin{equation}\label{eq: definition of K_n}
 K_n := \{x\omega_1 + y\omega_2 \in \affine |\, x,y\in \mathbb{R},\, 0\leq x, y\leq n\}.
\end{equation}
$K_n$ is a fundamental domain of $n\Lambda$ in $\affine$.
There is a positive constant $C_1 = C_1(\Lambda)$ such that 
\[ \sup_{z\in K_1}\, \underline{d}(f(z), g(z)) \leq C_1 \, d(f, g) \quad 
\text{for $f, g\in \moduli(\affine P^N)$}.\]
Then for any $n>0$ we have
\begin{equation}\label{eq: sup < d}
\sup_{z\in K_n} \underline{d}(f(z), g(z)) \leq C_1 \, d_{\Omega_n}(f, g) \quad
\text{for $f, g\in \moduli(\affine P^N)$},
\end{equation}
where $\Omega_n$ and $d_{\Omega_n}(\cdot, \cdot)$ are defined by (\ref{def: Omega_n}) 
and (\ref{def: d_Omega_n}).

Let $\varphi :\affine /\Lambda \to \affine P^N$ be a non-constant holomorphic map satisfying 
$|d\varphi|<1$.
We define $\tilde{\varphi}$, $E$ and $V$ as before. 
For any positive integer $n$, let $\pi_n:\affine/n\Lambda \to \affine /\Lambda$ be the 
natural $n^2$-fold covering map, and set 
$\varphi_n:= \varphi \circ \pi_n :\affine /n\Lambda \to \affine P^N$.
Consider 
\[ V_n := H^0(\affine/n\Lambda, \mathcal{O}(\varphi_n^*T'\affine P^N)) .\]
$V_n$ is the space of holomorphic sections of $\varphi_n^*T'\affine P^N$ over 
$\affine /n\Lambda$, and it can be identified with the subspace of $V$ consisting of
$n\Lambda$-invariant holomorphic sections of $E$.
From the Riemann-Roch formula and 
the vanishing of $H^1$ (cf. Section 5), we have
\[ \dim V_n = 2\dim_\affine V_n 
= 2\langle \varphi_n^*c_1 (\affine P^N), [\affine /n\Lambda]\rangle
= 2n^2(N+1) \deg\varphi .\]
(Actually we need only the inequality $\dim V_n \geq  2n^2(N+1) \deg\varphi$ in this proof.
Hence we don't need $H^1 = 0$.)
Let $\delta, C$ be the positive constants in Proposition \ref{prop: main proposition}.
Set $B_\delta(V_n) := \{ u\in V_n| \supnorm{u}\leq \delta\}$.
For any $u\in B_\delta(V_n)$ there exists a Brody curve $f_u$. 
From the $\Lambda$-equivariance in Proposition \ref{prop: main proposition} (ii),
$f_u$ is $n\Lambda$-invariant (i.e., it can be considered as a holomorphic map 
from $\affine /n\Lambda$ to $\affine P^N$).
Then from Proposition \ref{prop: main proposition} (iii) and (\ref{eq: sup < d}),
for any $u, v\in B_\delta(V_n)$
\[ \supnorm{u-v}\leq C\sup_{z\in \affine}\underline{d}(f_u(z), f_v(z)) 
= C\sup_{z\in K_n}\underline{d}(f_u(z), f_v(z)) \leq CC_1\, d_{\Omega_n}(f_u, f_v) .\]
Moreover Proposition \ref{prop: main proposition} shows that the map 
$B_\delta(V_n) \to \moduli(\affine P^N)$, $u\mapsto f_u$, is continuous.
Therefore for any $\varepsilon >0$
\[ \Widim_\varepsilon (\moduli ( \affine P^N), d_{\Omega_n} ) 
\geq \Widim_{CC_1\varepsilon}(B_\delta(V_n), \supnorm{\cdot}), \]
where $B_\delta(V_n)$ is equipped with the distance $\supnorm{u-v}$.
Then Proposition \ref{prop: widim inequality} implies, for $\varepsilon < \delta/CC_1$,
\[ \Widim_\varepsilon (\moduli (\affine P^N), d_{\Omega_n}) \geq \dim V_n = 
2n^2(N+1) \deg\varphi .\]
Note that $\delta/CC_1$ is independent of $n$ (this is the crucial point). Hence
\[ \Widim_\varepsilon(\moduli(\affine P^N):\Lambda) = 
\lim_{n\to \infty} \frac{1}{n^2}\Widim(\moduli(\affine P^N), d_{\Omega_n}) 
\geq 2(N+1)\deg\varphi  ,\]
for any $\varepsilon < \delta/CC_1$. Thus
\[ \dim(\moduli(\affine P^N):\Lambda) \geq 2(N+1)\deg\varphi .\]
Using Proposition \ref{prop: relation between lambda and affine}, we get
\begin{equation}\label{eq: conclusion of the proof of main theorem}
 \begin{split}
 \dim(\moduli(\affine P^N):\affine) &=
 \frac{1}{|\affine/\Lambda|}\dim(\moduli(\affine P^N):\Lambda) ,\\
 &\geq 2(N+1)\deg\varphi/|\affine /\Lambda| = 2(N+1) e(\tilde{\varphi}) .
 \end{split}
\end{equation}

Then we can prove Theorem \ref{main theorem}.
Let $f\in \moduli(\affine P^N)$ be any elliptic Brody curve.
Take a positive number $c<1$ and set $g(z):= f(cz)$. Then 
$g$ is an elliptic Brody curve satisfying $|dg|<1$, and we can apply 
(\ref{eq: conclusion of the proof of main theorem}) to $g$:
\[ \dim(\moduli(\affine P^N):\affine) \geq 2(N+1)e(g) = 2c^2 (N+1)e(f) .\]
Let $c\to 1$. Then
\[ \dim(\moduli(\affine P^N):\affine) \geq 2(N+1)e(f) .\]
This shows Theorem \ref{main theorem}.
\end{proof}
\begin{remark}
In the above proof, each $B_\delta(V_n)$ describes a small deformation of 
$\varphi_n :\affine/n\Lambda \to \affine P^N$.
The small deformations of each $\varphi_n$ can be constructed by the usual deformation theory.
The point of Proposition \ref{prop: main proposition} is that 
we can construct the deformations of all $\varphi_n$ with the estimates
\textit{independent of} $n$; This is essential in the above proof.
\end{remark}

%%%%%%%%%%%%%%%%%%%%%%%%%%%%%%%%%%%%%%%%%%%%%%%%%%%%%%%%%%%%%%%%%%%%%%%%%%%%%%%%%%%%%%%%%
%%%%%%%%%%%%%%%%%%%%%%%%%%%%%%%%%%%%%%%%%%%%%%%%%%%%%%%%%%%%%%%%%%%%%%%%%%%%%%%%%%%%%%%%%

\section{Analytic preliminaries}
This section is a preparation for the proof of Proposition \ref{prop: main proposition}.
\subsection{Helmholtz equation}
We will need some elementary facts about the Helmholtz equation on the plane $\mathbb{R}^2$:
\begin{equation}\label{eq: Helmholtz equation}
 (-\Delta + \lambda) w = 0, \quad \text{where $\lambda > 0$ and 
$\Delta =  \frac{\partial^2}{\partial x^2} + \frac{\partial^2}{\partial y^2}$} .
\end{equation}
Set 
\begin{equation}\label{eq: special solution of Helmholtz equation}
 w_\lambda(z) := \frac{1}{2\pi}\int_0^{2\pi}\exp \sqrt{\lambda}(x\cos \theta + y\sin \theta )d\theta .
\end{equation}
$w_\lambda$ satisfies (\ref{eq: Helmholtz equation}) and $w_\lambda >0$.
The following fact can be easily checked:
\begin{lemma} \label{lemma: property of Helmholtz equation}
The minimum value of $w_\lambda$ is $w_\lambda(0) = 1$, and 
$w_\lambda(z) \to + \infty$ as $|z| \to \infty$.
\end{lemma}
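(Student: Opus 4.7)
The plan is to exploit the rotational symmetry of the integrand in (\ref{eq: special solution of Helmholtz equation}) to reduce everything to a one-variable problem. Writing $z = re^{i\phi}$ with $r = |z|$ and substituting $\theta \mapsto \theta + \phi$ in the integral, the linear form $x\cos\theta + y\sin\theta = r\cos(\theta-\phi)$ becomes $r\cos\theta$, so
\[
w_\lambda(z) \;=\; \frac{1}{2\pi}\int_0^{2\pi} e^{\sqrt{\lambda}\,r\cos\theta}\,d\theta,
\]
which depends only on $r$. (This is the modified Bessel function $I_0(\sqrt{\lambda}\,r)$, but I will not need anything about $I_0$ beyond what is proved directly below.)

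For the minimum claim, I would apply Jensen's inequality to the strictly convex function $t\mapsto e^t$ and the uniform probability measure $d\theta/2\pi$ on $[0,2\pi]$:
\[
w_\lambda(z) \;\geq\; \exp\!\left(\frac{1}{2\pi}\int_0^{2\pi}\sqrt{\lambda}\,r\cos\theta\,d\theta\right) \;=\; e^0 \;=\; 1,
\]
with equality only when $\theta\mapsto \sqrt{\lambda}\,r\cos\theta$ is constant, which forces $r=0$. Since $w_\lambda(0)=1$ is immediate from the definition, this gives $\min w_\lambda = w_\lambda(0) = 1$, attained uniquely at the origin.

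For the divergence at infinity, fix any $\eta \in (0,\pi/2)$. On the arc $|\theta|\leq \eta$ one has $\cos\theta \geq \cos\eta > 0$, so keeping only this part of the integral gives
\[
w_\lambda(z) \;\geq\; \frac{1}{2\pi}\cdot 2\eta\cdot e^{\sqrt{\lambda}\,r\cos\eta} \;=\; \frac{\eta}{\pi}\,e^{\sqrt{\lambda}\,r\cos\eta} \;\longrightarrow\; +\infty
\]
as $r = |z|\to\infty$. There is no serious obstacle here: the reduction to the radial integral is the only nontrivial step, and after that both assertions follow from convexity and a crude one-sided bound on the integrand, which matches the author's comment that the lemma can be checked by elementary means.
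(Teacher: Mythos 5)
Your proof is correct. The paper itself omits the argument, saying only that the lemma ``can be easily checked,'' so there is no proof in the text to compare against; your route via rotational symmetry (reducing $w_\lambda$ to the radial integral $\frac{1}{2\pi}\int_0^{2\pi} e^{\sqrt{\lambda}\,r\cos\theta}\,d\theta$), Jensen's inequality for the strictly convex exponential to get $w_\lambda \geq 1$ with equality only at $r=0$, and a crude one-sided arc estimate for the blow-up as $r\to\infty$, is exactly the kind of elementary verification the author evidently had in mind, and every step is sound.
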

\subsection{$L^\infty$-estimate}
Let $F$ be a holomorphic vector bundle over the complex plane $\affine$
with a Hermitian metric $h$.
Let $\dbar :\Omega^0(F) \to \Omega^{0,1}(F)$ be the Dolbeault operator, and
$\nabla$ the canonical connection on $(F,h)$.
We denote the formal adjoint of $\dbar$ and $\nabla$ by $\dbar^*$ and $\nabla^*$.
We have the following Weintzenb\"{o}ck formula: for any $\xi\in \Omega^{0,1}(F)$
\begin{equation}\label{Weintzenbock formula}
 \dbar \dbar^* \xi = \frac{1}{2}\nabla^*\nabla \xi + R \xi , 
\end{equation}
where $R \xi = [\nabla_{\partial /\partial z}, \nabla_{\partial /\partial\bar{z}}]\xi$.
Note that for $\xi = u\otimes d\bar{z}$ ($u\in \Gamma(F)$) we have
\[ \nabla_{\partial /\partial z}\xi = (\nabla_{\partial /\partial z}u)\otimes d\bar{z}, \quad
 \nabla_{\partial /\partial\bar{z}}\xi = (\nabla_{\partial /\partial\bar{z}}u)\otimes d\bar{z}.\]
For $\xi =u\otimes d\bar{z}$ and $\eta = v\otimes d\bar{z}$ ($u, v\in \Gamma(F)$), we set
$\langle\xi , \eta \rangle := 2h(u, v)$.
We suppose that $F$ is ``positive" in the following sense:
there exists a positive number $a$ such that for any $\xi\in \Omega^{0,1}(F)$
\begin{equation}\label{eq: positivity of Hermitian metric}
\langle R \xi, \xi\rangle \geq a|\xi|^2 .
\end{equation}
\begin{proposition} \label{prop: L^infty-estimate}
Let $\xi\in \Omega^{0,1}(F)$ be a $F$-valued $(0,1)$-form of class $\mathcal{C}^2$, 
and set $\eta := \dbar\dbar^* \xi$. 
If $\supnorm{\xi}, \supnorm{\eta} <\infty$, then 
\[ \supnorm{\xi} \leq \frac{8}{a}\supnorm{\eta} .\]
\end{proposition}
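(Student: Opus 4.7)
The plan is to extract a pointwise elliptic differential inequality for the scalar function $|\xi|$ from the Weitzenb\"{o}ck formula (\ref{Weintzenbock formula}), and then run a maximum-principle comparison on $\affine$ using the Helmholtz solution $w_{\lambda}$ of (\ref{eq: special solution of Helmholtz equation}) as a barrier to compensate for the non-compactness of the domain.

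First I would rewrite (\ref{Weintzenbock formula}) as $\nabla^{*}\nabla\xi = 2\eta - 2R\xi$ and take the pointwise real inner product with $\xi$. The standard Bochner identity $\mathrm{Re}\langle\nabla^{*}\nabla\xi,\xi\rangle = -\tfrac{1}{2}\Delta|\xi|^{2} + |\nabla\xi|^{2}$, combined with Kato's inequality $|\nabla\xi|\geq |\nabla|\xi||$, recasts everything in terms of the scalar $|\xi|$ alone (working on $\{|\xi|>0\}$, or smoothing $|\xi|$ to $\sqrt{|\xi|^{2}+\epsilon^{2}}$ and letting $\epsilon\to 0$). The positivity hypothesis (\ref{eq: positivity of Hermitian metric}) then produces a pointwise inequality of the shape
\[
(-\Delta + 2a)\,|\xi|\;\leq\;c\,\supnorm{\eta}
\]
on $\affine$, for an explicit constant $c$ whose value will be followed through the computation.

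The second step promotes this differential inequality to a global $L^{\infty}$ bound by comparison on the entire plane. For each $\epsilon>0$ set
\[
\Psi_{\epsilon}(z)\;:=\;\frac{c}{2a}\supnorm{\eta}\;+\;\epsilon\,w_{2a}(z).
\]
Since $(-\Delta + 2a)w_{2a}=0$, the function $\Psi_{\epsilon} - |\xi|$ satisfies $(-\Delta + 2a)(\Psi_{\epsilon} - |\xi|)\geq 0$. Because $\supnorm{\xi}<\infty$ while $w_{2a}(z)\to+\infty$ as $|z|\to\infty$ by Lemma~\ref{lemma: property of Helmholtz equation}, the difference $\Psi_{\epsilon} - |\xi|$ tends to $+\infty$ at infinity and hence attains its infimum at some interior point $z_{0}\in\affine$. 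There $-\Delta(\Psi_{\epsilon} - |\xi|)(z_{0})\leq 0$, so the differential inequality forces $(\Psi_{\epsilon}-|\xi|)(z_{0})\geq 0$; thus $|\xi|\leq \Psi_{\epsilon}$ on all of $\affine$. Letting $\epsilon\to 0$ gives $\supnorm{\xi}\leq \tfrac{c}{2a}\supnorm{\eta}$.

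The main technical obstacle is really bookkeeping of constants: the factor $\tfrac{1}{2}$ in (\ref{Weintzenbock formula}), the factor $2$ in the normalisation $\langle\xi,\eta\rangle = 2h(u,v)$ for $(0,1)$-forms, and the Cauchy--Schwarz step all contribute, and only when these are tracked cleanly does the final universal constant land on $8/a$. A secondary subtlety is the non-smoothness of $|\xi|$ at its zero set, which is routinely handled by the $\sqrt{|\xi|^{2}+\epsilon^{2}}$ regularisation before applying Kato's inequality. The conceptual novelty, beyond the standard Bochner--Kato pairing, lies in using the Helmholtz barrier $w_{2a}$ to replace the compactness of the domain that would otherwise be required for the maximum principle on $\affine$.
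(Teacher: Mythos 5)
Your proposal is correct and follows the same overall strategy as the paper: extract a second-order differential inequality from the Weitzenb\"ock formula (\ref{Weintzenbock formula}) and the positivity (\ref{eq: positivity of Hermitian metric}), then close the estimate with a maximum-principle comparison on $\affine$ using the Helmholtz solution $w_{2a}$ to control behavior at infinity. The differences are in bookkeeping, and they are worth spelling out because they actually make your version sharper than the paper's.

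The paper works with the scalar $|\xi|^2$, which is $\mathcal{C}^2$ everywhere, so it never needs Kato's inequality or any regularization at the zero set of $\xi$. The resulting inequality is $(-\Delta + 4a)|\xi|^2 \leq 4\supnorm{\xi}\supnorm{\eta}$. The paper then picks a point $z_0$ with $|\xi(z_0)| \geq \supnorm{\xi}/2$, takes $w := \tfrac{M}{2a}w_{2a}$ with $M = 4\supnorm{\xi}\supnorm{\eta}$ as the \emph{entire} barrier, and applies the minimum principle to $w - |\xi|^2$; the factor $1/2$ in ``$\supnorm{\xi}/2$'' costs a factor of $4$ after squaring, and that is where most of the $8/a$ comes from. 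You instead work with $|\xi|$ via Kato (which, as you note, requires either restricting to $\{|\xi|>0\}$ or smoothing to $\sqrt{|\xi|^2+\epsilon^2}$), and you use $\epsilon\, w_{2a}$ as a \emph{vanishing perturbation} of a constant barrier, which forces the infimum of $\Psi_\epsilon - |\xi|$ to be attained at a finite point and then disappears as $\epsilon \to 0$. This is cleaner: it avoids the lossy ``sup is nearly attained'' step.

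One thing to correct in your expectations: if you track your constants, from $\nabla^*\nabla\xi = 2\eta - 2R\xi$ and Kato you get $|\xi|\,\Delta|\xi| \geq -2|\eta||\xi| + 2a|\xi|^2$, hence $(-\Delta + 2a)|\xi| \leq 2\supnorm{\eta}$, so your $c = 2$ and the barrier argument yields $\supnorm{\xi} \leq \tfrac{1}{a}\supnorm{\eta}$. You will \emph{not} land on $8/a$; you will land on something strictly smaller, which proves the proposition a fortiori. The paper's $8/a$ is simply not sharp: the factor $4$ comes from squaring the ``$\geq \supnorm{\xi}/2$'' choice of base point, and another factor of $2$ from working with $|\xi|^2$ rather than $|\xi|$. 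So do not contort the computation to force out $8$ --- accept the improvement.
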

\begin{proof}
There is a point $z_0\in \affine$ satisfying $|\xi(z_0)|\geq \supnorm{\xi}/2$.
We suppose $z_0 =0$ for simplicity. We have
\[ \Delta |\xi|^2 = -2\mathrm{Re}\langle \nabla^*\nabla\xi, \xi\rangle + 2|\nabla \xi|^2 .\]
Using the Weintzenb\"{o}ck formula (\ref{Weintzenbock formula}) and $\eta = \dbar\dbar^* \xi$, 
we have
\begin{equation*}
 \begin{split}
 \Delta |\xi|^2 &= -4 \mathrm{Re}\langle \eta, \xi\rangle + 4\langle R\xi , \xi\rangle
 + 2|\nabla\xi|^2, \\
 &\geq -4 \mathrm{Re}\langle \eta, \xi\rangle + 4a|\xi|^2 .
 \end{split}
\end{equation*}
Set $M:= 4\supnorm{\xi}\supnorm{\eta}$. We have
$(-\Delta + 4a)|\xi|^2 \leq M$.

Set $w(z) := M w_{2a}(z)/2a$, where $w_{2a}$ is a function defined in 
(\ref{eq: special solution of Helmholtz equation}).
$w(z)$ satisfies
\[ (-\Delta + 2a)w =0, \quad w \geq M/2a .\]
Then $(-\Delta + 4a)w = 2aw \geq M$. Therefore
\[ (-\Delta + 4a)(w-|\xi|^2) \geq 0 .\]
Since $\supnorm{\xi} <\infty$ and $w(z) \to \infty$ $(|z|\to \infty)$, we have
$w(z) -|\xi|^2 >0$ $(|z| \gg 0)$.
Then we can apply the minimum principle 
(see Gilbarg-Trudinger \cite[Chapter 3, Corollary 3.2]{Gilbarg-Trudinger}), and get 
\[ w(0) - |\xi(0)|^2 \geq 0 .\]
Therefore 
\[ \supnorm{\xi}^2/4 \leq |\xi(0)|^2 \leq w(0) = M/2a = 2\supnorm{\xi}\supnorm{\eta}/a .\]
Thus $\supnorm{\xi} \leq 8\supnorm{\eta}/a$.
\end{proof}

\subsection{Perturbation of a Hermitian metric}
We briefly discuss a perturbation technique of a Hermitian metric. 
M. Gromov also discuss it in \cite[p. 399]{Gromov}.
Let $\Lambda\subset \affine$ be a lattice and $\varphi:\affine/\Lambda \to \affine P^N$
a non-constant holomorphic map. 
Let $\varphi^*T'\affine P^N \to \affine/\Lambda$ be the pull-back of the holomorphic tangent bundle 
$T'\affine P^N$ with the Hermitian metric $h$ induced by the Fubini-Study metric.
Since the holomorphic bisectional curvature of the Fubini-Study metric is positive, there is $c>0$
such that for any $u\in \Gamma(\varphi^*T'\affine P^N)$
\begin{equation}\label{eq: positivity of the Fubini-Study metric}
 h(Ru, u) \geq c|d\varphi|^2 |u|^2, 
\end{equation}
where $R$ is the curvature defined by 
$Ru:= [\nabla_{\partial /\partial z}, \nabla_{\partial /\partial\bar{z}}]u$ 
($\nabla$ is the canonical connection).
\begin{lemma}\label{lemma: perturbation of a Hermitian metric}
There is a Hermitian metric $h'$ on $\varphi^*T'\affine P^N$ satisfying the following:
There exists $a>0$ such that for any $u\in \Gamma(\varphi^*T'\affine P^N)$
\[ h'(R'u, u) \geq a |u|^2 , \]
where $R'$ is the curvature of $h'$.
\end{lemma}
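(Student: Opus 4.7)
The plan is to look for $h'$ in the form $h' = e^{-\psi} h$, where $\psi: \affine/\Lambda \to \mathbb{R}$ is a smooth function to be chosen; i.e., I use a conformal rescaling of the given metric. This is the most economical way to modify the curvature term because the Chern connection transforms in a controlled way under such rescalings.

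First I would record the effect of $h \mapsto e^{-\psi}h$ on the Chern curvature. In a local holomorphic frame, the connection $1$-form changes by $\theta' = \theta - (\partial \psi)\cdot \mathrm{Id}$, and hence the Chern curvature transforms by
\[ R' = R + \frac{1}{4}\Delta \psi \cdot \mathrm{Id}, \]
where $R = [\nabla_{\partial/\partial z}, \nabla_{\partial/\partial\bar z}]$ and I have used $\partial^2/\partial z \partial\bar z = \frac{1}{4}\Delta$. Since the desired inequality $h'(R'u,u) \geq a |u|_{h'}^2$ is equivalent (after cancelling the common factor $e^{-\psi}$) to $h(R'u,u) \geq a|u|_h^2$, I would combine the curvature change with the given bisectional-positivity estimate (\ref{eq: positivity of the Fubini-Study metric}) to reduce the lemma to the pointwise scalar inequality
\[ c|d\varphi|^2 + \frac{1}{4}\Delta \psi \geq a \quad \text{on } \affine/\Lambda. \]

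Next I would solve this inequality on the compact Riemann surface $\affine/\Lambda$ by a standard Hodge-theoretic argument. Put $m := |d\varphi|^2$ and let $\bar m := |\affine/\Lambda|^{-1}\int_{\affine/\Lambda} m\, dxdy$; since $\varphi$ is non-constant, $\bar m > 0$. The function $4c(\bar m - m)$ has zero mean, so by standard elliptic theory on a compact Riemann surface the Poisson equation
\[ \Delta \psi = 4c(\bar m - m) \]
admits a smooth real solution. For this $\psi$, the pointwise bound becomes the equality $c m + \frac{1}{4}\Delta \psi \equiv c \bar m$, so setting $a := c \bar m > 0$ yields the lemma.

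The main step really is just the curvature-transformation computation; once that is in hand, solvability of the Poisson equation and non-constancy of $\varphi$ do the rest. There is no serious obstacle, but some care is needed to check signs and normalisations in the identity $R' = R + \tfrac{1}{4}\Delta\psi\cdot\mathrm{Id}$, and to use that the test section $u$ is arbitrary (so one really needs a scalar endomorphism correction, which is exactly what a conformal rescaling provides).
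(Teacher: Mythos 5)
Your proof is correct and takes essentially the same route as the paper: conformal rescaling $h' = e^{-\psi}h$, the curvature transformation $R' = R + \tfrac{1}{4}(\Delta\psi)\,\mathrm{Id}$, the bisectional-curvature lower bound $h(Ru,u)\geq c|d\varphi|^2|u|^2$, and solvability of a Poisson equation with zero-mean source on the compact torus $\affine/\Lambda$. The only difference is the choice of source term: you solve $\Delta\psi = 4c(\bar m - |d\varphi|^2)$ so the final pointwise bound is an equality $c|d\varphi|^2+\tfrac14\Delta\psi \equiv c\bar m$, whereas the paper hand-builds a zero-mean function $g$ that is positive near the zeros of $d\varphi$ and $\geq -c'/2$ elsewhere and then solves $\tfrac14\Delta f = g$; your choice is a little cleaner but implements the same idea.
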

\begin{proof}
Set $h' = e^{-f}h$ where $f$ is a real valued function defined later.
Then for any $u \in \Gamma(\varphi^*T'\affine P^N)$
\[ R' u = \frac{1}{4}(\Delta f) u + Ru \quad\text{and}\quad
h'(R'u, u) = e^{-f}\{\frac{1}{4}(\Delta f) |u|^2 + h(Ru, u)\} .\]
Set $\{p\in \affine /\Lambda|\, d\varphi (p) =0\} =: \{p_1, \cdots, p_n\}$.
Let $\delta>0$ be a sufficiently small number and set 
$A:= \coprod_i B_\delta(p_i) \subset \affine /\Lambda$ 
($B_\delta(p_i)$ is the closed ball of radius $\delta$ centered at $p_i$).
From (\ref{eq: positivity of the Fubini-Study metric}) there is $c' >0$ such that 
\[ h(Ru, u) \geq c' |u|^2 \quad \text{for $u\in (\varphi^*T'\affine P^N)_p$ at 
$p\in A^c = (\affine /\Lambda)\setminus A$} .\]
Let $g$ be a real valued function on $\affine /\Lambda$ satisfying 
\[ (i)\, g > 0 \text{ on $A$}, \quad (ii)\, g \geq -c'/2 \text{ on $A^c$}, \quad
(iii)\, \int_{\affine /\Lambda} g \,dxdy = 0 .\]
From the condition (iii), there exists $f$ satisfying $\Delta f/4 =g$.
Therefore
\[ h'(R'u, u) = e^{-f}(g\, |u|^2 + h(Ru,u)) .\]
From the conditions (i) and (ii),
it is easy to see that there exists $a>0$ such that $h'(R'u, u) \geq a|u|^2$
for all sections $u$.
\end{proof}

%%%%%%%%%%%%%%%%%%%%%%%%%%%%%%%%%%%%%%%%%%%%%%%%%%%%%%%%%%%%%%%%%%%%%%%%%%%%%%%%%%%%%%%%%%%%
%%%%%%%%%%%%%%%%%%%%%%%%%%%%%%%%%%%%%%%%%%%%%%%%%%%%%%%%%%%%%%%%%%%%%%%%%%%%%%%%%%%%%%%%%%%%

\section{Deformation theory}
In this section we prove Proposition \ref{prop: main proposition}
by constructing ``deformation theory".
\begin{remark}
M. Gromov gives a certain ``deformation" argument different from ours
in \cite[pp. 399-400]{Gromov}.
\end{remark}
\subsection{Deformation and the proof of Proposition \ref{prop: main proposition}}
Let $\Lambda\subset \affine$ be a lattice and 
$\pi:\affine \to \affine /\Lambda$ the natural projection.
Let $\varphi:\affine /\Lambda\to \affine P^N$ be a non-constant 
holomorphic map satisfying $|d\varphi|<1$ and set $\tilde{\varphi} := \varphi\circ\pi$.
Let $E:= \tilde{\varphi}^*T'\affine P^N$ be the pull-back of the holomorphic tangent bundle
$T'\affine P^N$. $E$ is equipped with the Hermitian metric $h$ induced by the Fubini-Study metric.
$E$ admits the natural $\Lambda$-action.

Let $k$ be a non-negative integer and $\alpha$ a real number satisfying $0<\alpha<1$.
We want to define the H\"{o}lder spaces $\mathcal{C}^{k, \alpha}(E)$ and 
$\mathcal{C}^{k,\alpha}(\Omega^{0,1}(E))$.
Let $\{U_n\}_{n=1}^m$, $\{U'_n\}_{n=1}^m$ and $\{U''_n\}_{n=1}^m$
be open coverings of $\affine /\Lambda$ satisfying 
the following (i), (ii), (iii).

(i) $\bar{U}_n \subset U'_n$ and $\bar{U}'_n \subset U''_n$, and all 
$U_{n}$, $U'_{n}$, $U''_n$ are smooth regions i.e., their boundaries are smooth.

(ii) The covering map $\pi:\affine \to \affine /\Lambda$ can be trivialized on 
each $U''_n$, i.e., there is a disjoint union 
$\pi^{-1}(U''_n) = \coprod_{\lambda\in \Lambda} U''_{n,\lambda}$ such that 
each $U''_{n,\lambda}$ is a connected component of $\pi^{-1}(U''_n)$ and 
$\pi|_{U''_{n,\lambda}}:U''_{n,\lambda}\to U''_n$ is biholomorphic.
Set $U_{n,\lambda} := \pi^{-1}(U_n)\cap U''_{n,\lambda}$ and
$U'_{n,\lambda} := \pi^{-1}(U'_n)\cap U''_{n,\lambda}$, then 
$\pi|_{U_{n,\lambda}}:U_{n,\lambda}\to U_n$ and $\pi|_{U'_{n,\lambda}}:U'_{n,\lambda}\to U'_n$
are biholomorphic and
we have disjoint unions $\pi^{-1}(U_n) = \coprod_{\lambda\in \Lambda} U_{n,\lambda}$ and
$\pi^{-1}(U'_n) = \coprod_{\lambda\in \Lambda} U'_{n,\lambda}$.

(iii) A bundle trivialization of $\varphi^*T'\affine P^N$ is given on each $U''_n$,
i.e., we have a holomorphic bundle isomorphism 
$\varphi^*T'\affine P^N|_{U''_n}\to U''_n \times \affine^N$.
Then we also have a trivialization of $E$ over each $U''_{n,\lambda}$
through the isomorphisms $U''_{n,\lambda}\to U''_n$.

Let $u$ be a section of $E$ (not necessarily holomorphic). 
From (iii) in the above, $u|_{U''_{n,\lambda}}$ can be seen as a vector-valued function 
on $U''_{n,\lambda}$. Hence we can define its $\mathcal{C}^{k,\alpha}$-norm 
$\Dnorm{u}{k,\alpha}{\bar{U}_{n,\lambda}}$ over $\bar{U}_{n,\lambda}$
as a vector-valued function
(see Gilbarg-Trudinger \cite[Chapter 4]{Gilbarg-Trudinger}). 
We define the $\mathcal{C}^{k,\alpha}(E)$-norm of $u$ by 
\[ \Dnorm{u}{k,\alpha}{E} := \sup_{n,\lambda}\Dnorm{u}{k,\alpha}{\bar{U}_{n,\lambda}} .\]
We define the H\"{o}lder space $\mathcal{C}^{k,\alpha}(E)$ as the space of 
sections of $E$ whose $\mathcal{C}^{k,\alpha}(E)$-norms are finite.
For $\xi = u\otimes d\bar{z} \in \Omega^{0,1}(E)$ $(u\in \Gamma(E))$ we define its 
$\mathcal{C}^{k,\alpha}(\Omega^{0,1}(E))$-norm by
\[ \Dnorm{\xi}{k,\alpha}{\Omega^{0,1}(E)} := \sqrt{2} \Dnorm{u}{k,\alpha}{E} ,\]
and we define 
$\mathcal{C}^{k,\alpha}(\Omega^{0,1}(E)) := \mathcal{C}^{k,\alpha}(E)\otimes d\bar{z}$.
Then $\mathcal{C}^{k,\alpha}(E)$ and $\mathcal{C}^{k,\alpha}(\Omega^{0,1}(E))$ become Banach spaces.
(In the above definition of the H\"{o}lder spaces we have not used the open sets $U'_{n,\lambda}$.
They will be used in the next subsection.)

The holomorphic tangent bundle $T'\affine P^N$ is the eigenspace of the complex 
structure $J$ on $T\affine P^N \otimes_{\mathbb{R}} \affine$ of eigenvalue $\sqrt{-1}$.
We naturally identify $T'\affine P^N$ with the tangent bundle $T\affine P^N$ by
\[ T\affine P^N \ni u \longleftrightarrow u-\sqrt{-1}Ju \in T'\affine P^N .\]
So $E$ can be identified with $\tilde{\varphi}^*T\affine P^N$.

Consider (cf. McDuff-Salamon \cite[Chapter 3]{McDuff-Salamon})
\[ \Phi: \mathcal{C}^{1, \alpha}(E) \to \mathcal{C}^{0,\alpha}(\Omega^{0,1}(E)) \quad 
u\mapsto P_u(\dbar\exp u)\otimes d\bar{z} .\]
Here, $\exp :T\affine P^N \to \affine P^N$ is the exponential map defined by the Fubini-Study metric, 
and $P_u: T_{\exp u}\affine P^N \to T_{\tilde{\varphi}}\affine P^N$ is the parallel transport
along the geodesic $\exp tu$ $(0\leq t\leq 1)$.
$\dbar\exp u\in T_{\exp u}\affine P^N$ is defined by 
\[ \dbar\exp u := 
\frac{1}{2}\left(\frac{\partial}{\partial x}\exp u + 
J \frac{\partial}{\partial y}\exp u\right) .\]
$\Phi$ is a smooth map between the Banach spaces, and it is $\Lambda$-equivariant.
The map $\affine \ni z\mapsto \exp u(z)\in \affine P^N$ 
becomes a holomorphic curve if and only if $\Phi (u) =0$.
The derivative of $\Phi$ at the origin is the Dolbeault operator:
\begin{equation}\label{eq: the derivative at the origin}
 (d\Phi)_0 = \dbar :\mathcal{C}^{1, \alpha}(E) \to \mathcal{C}^{0,\alpha}(\Omega^{0,1}(E)).
\end{equation}
\begin{proposition}\label{prop: unobstructedness}
The small deformation of $\tilde{\varphi}$ is unobstructed, i.e.,  
there exists a $\Lambda$-equivariant bounded linear operator 
$Q:  \mathcal{C}^{0,\alpha}(\Omega^{0,1}(E)) \to \mathcal{C}^{1, \alpha}(E)$ satisfying 
$\dbar\circ Q = 1$.
\end{proposition}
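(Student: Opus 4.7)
The plan is to construct $Q$ as a right inverse of $\dbar$ via the composition $Q = \dbar^*\circ G$, where $G$ inverts the Laplacian $\dbar\dbar^*$ in the $L^\infty$-setting. First I would apply Lemma \ref{lemma: perturbation of a Hermitian metric} to replace the Fubini-Study metric on $\varphi^*T'\affine P^N$ by a $\Lambda$-invariant Hermitian metric $h'$ whose curvature is uniformly bounded below by a positive constant $a$. Pulled back, this metric makes $E$ a $\Lambda$-equivariant bundle satisfying the hypothesis (\ref{eq: positivity of Hermitian metric}), so Proposition \ref{prop: L^infty-estimate} applies to $E$ with its induced $\dbar^*$.

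Given $\eta\in \mathcal{C}^{0,\alpha}(\Omega^{0,1}(E))$, the strategy is to produce $\xi\in \Omega^{0,1}(E)$ with $\dbar\dbar^*\xi = \eta$ and $\supnorm{\xi}<\infty$, and then set $Q\eta := \dbar^*\xi$, which gives $\dbar Q\eta = \dbar\dbar^*\xi = \eta$ immediately. To find $\xi$ I would argue by exhaustion: on each disk $D_R\subset \affine$, the Dirichlet problem $\dbar\dbar^*\xi_R = \eta$ on $D_R$ with $\xi_R|_{\partial D_R}=0$ has a unique smooth solution by standard elliptic theory. The comparison argument in the proof of Proposition \ref{prop: L^infty-estimate} extends to $D_R$: picking an interior maximum point $z_0$ of $|\xi_R|$ and comparing $|\xi_R|^2$ with $Mw_{2a}(z-z_0)/(2a)$ (for $M=4\supnorm{\xi_R}\supnorm{\eta}$) via the minimum principle for $-\Delta+4a$, using $w>0$ on $\partial D_R$ from Lemma \ref{lemma: property of Helmholtz equation}, yields $\supnorm{\xi_R}\le (8/a)\supnorm{\eta}$, uniformly in $R$. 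Interior Schauder estimates for the elliptic equation $\dbar\dbar^*\xi_R = \eta$ then give uniform $\mathcal{C}^{2,\alpha}$-bounds on each compact set, so Arzel\`a-Ascoli extracts a subsequential limit $\xi$ solving $\dbar\dbar^*\xi = \eta$ on all of $\affine$ with $\supnorm{\xi}\le (8/a)\supnorm{\eta}$.

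Well-definedness of $Q\eta := \dbar^*\xi$ follows from uniqueness of the bounded solution: if $\dbar\dbar^*\xi' = 0$ with $\supnorm{\xi'}<\infty$, Proposition \ref{prop: L^infty-estimate} forces $\xi'=0$. The $\Lambda$-equivariance of $Q$ is then automatic, since for $\lambda\in\Lambda$ the translate $\lambda\cdot\xi$ is also a bounded solution to $\dbar\dbar^*(\lambda\cdot\xi) = \lambda\cdot\eta$, and $\dbar^*$ commutes with the $\Lambda$-action because $h'$ is $\Lambda$-invariant. For the operator norm bound, I would apply interior Schauder estimates on each pair $U_{n,\lambda}\subset U'_{n,\lambda}$ from Subsection 5.1 to obtain
\[ \Dnorm{\xi}{2,\alpha}{\bar U_{n,\lambda}} \le C\bigl(\Dnorm{\eta}{0,\alpha}{\bar U'_{n,\lambda}} + \supnorm{\xi}\bigr), \]
with $C$ independent of $(n,\lambda)$ by $\Lambda$-invariance and the finiteness of the atlas $\{U_n\}$ on $\affine/\Lambda$. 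Taking the supremum over $(n,\lambda)$ and combining with the $L^\infty$-bound above yields $\Dnorm{Q\eta}{1,\alpha}{E} \le C'\Dnorm{\eta}{0,\alpha}{\Omega^{0,1}(E)}$.

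The hard part is the uniform $L^\infty$-control of $\xi_R$ as $R\to\infty$, which is precisely what the machinery of Section 4 (the $L^\infty$-estimate together with the Helmholtz comparison function and the perturbation of the Hermitian metric) is designed to deliver; the key is that the bound coming out of Proposition \ref{prop: L^infty-estimate} is independent of the periods of $\Lambda$, which is what will make this proposition usable in Section 3 with the estimates independent of $n$. Everything else, including the conversion of the $L^\infty$-bound to the H\"older estimate for $Q\eta$ and the verification that the $\Lambda$-equivariance of the construction is preserved under exhaustion and subsequential limits, is standard elliptic theory adapted to the $L^\infty$-framework emphasized in this paper.
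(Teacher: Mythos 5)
Your proposal is correct and follows the paper's overall strategy (perturb the metric via Lemma \ref{lemma: perturbation of a Hermitian metric}, invert $\dbar\dbar^*$ using the $L^\infty$-estimate of Proposition \ref{prop: L^infty-estimate}, set $Q=\dbar^*(\dbar\dbar^*)^{-1}$, and pass to $\mathcal{C}^{k,\alpha}$-control via interior Schauder on the $\Lambda$-periodic atlas $\{U_{n,\lambda}\subset U'_{n,\lambda}\}$), but your approximation scheme for producing a bounded solution of $\dbar\dbar^*\xi=\eta$ is genuinely different. You fix $\eta$ and solve Dirichlet problems $\dbar\dbar^*\xi_R=\eta$, $\xi_R|_{\partial D_R}=0$, on an exhaustion of the plane, obtaining the uniform $\supnorm{\xi_R}$-bound by adapting the Helmholtz-comparison argument to bounded domains (since the max of $|\xi_R|^2$ is attained at an interior point $z_0$ and $w>0>|\xi_R|^2-w$ on $\partial D_R$, the minimum principle for $-\Delta+4a$ applies directly; as you say one even gets the sharper $2/a$ rather than $8/a$). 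The paper instead cuts off the \emph{data}: it sets $\eta_k:=\phi_k\eta$ with compactly supported $\eta_k$, proves Lemma \ref{lemma: existence of xi for compact-supported eta} via the Riesz representation theorem in $L^2_1(\Omega^{0,1}(E))$ (using the Weitzenb\"ock formula to show $(\dbar^*\cdot,\dbar^*\cdot)_{L^2}$ is equivalent to the $L^2_1$ inner product), upgrades to $\mathcal{C}^{2,\alpha}$ by Sobolev embedding and Schauder, and only then invokes Proposition \ref{prop: L^infty-estimate} to get $\supnorm{\xi_k}\leq\const\cdot\supnorm{\eta}$; the final Arzel\`a--Ascoli extraction is the same in both. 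Your route stays entirely within the $L^\infty$-framework the paper emphasizes and avoids the $L^2_1$ Lax--Milgram step, at the small cost of having to rerun the comparison argument on bounded domains and invoke solvability of a Dirichlet problem for the bundle Laplacian; the paper's route keeps Proposition \ref{prop: L^infty-estimate} usable verbatim (on all of $\affine$) but needs the $L^2$ existence theory as an auxiliary input. Both are sound, and both hinge on the same point you correctly flag: the $L^\infty$-bound is independent of $R$ (respectively $k$), which is what ultimately makes the final estimate uniform in the covers $\affine/n\Lambda$ in Section 3.
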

This proposition will be proved later.
Let $V := \ker \dbar$ be the kernel of (\ref{eq: the derivative at the origin})
(this definition coincides with (\ref{definition of the tangent space})).
Note that $V$ is a complement of the image of $Q$ in $\mathcal{C}^{1,\alpha}(E)$
and that it is $\Lambda$-invariant.
From the elliptic regularity (cf. Subsection 5.2), we have 
\[ \supnorm{u} \leq  \mathrm{const}\cdot \Dnorm{u}{1,\alpha}{E} \leq \mathrm{const}'\cdot \supnorm{u}
\quad \text{for any $u\in V$}, \]
where $\mathrm{const}$ and $\mathrm{const}'$ are independent of $u$. 

For $r>0$ set $B_r:=\{u\in V| \supnorm{u}\leq r\}$.
From Proposition \ref{prop: unobstructedness} and the implicit function theorem,
there are $\delta >0$ and a $\Lambda$-equivariant smooth map $g:B_\delta \to \mathrm{Image} (Q)$
satisfying 
\begin{equation}\label{eq: conditions of g}
(i)\, g(0) = 0, \quad (ii)\, \Phi(u + g(u)) = 0 \text{ for all $u\in B_\delta$}, 
\quad (iii)\, (dg)_0 = 0 .
\end{equation}
Set $f_u := \exp(u+g(u)):\affine \to \affine P^N$ for $u\in B_\delta$.
We want to show that these $f_u$ satisfy the conditions in 
Proposition \ref{prop: main proposition}.
From (i) and (ii) in (\ref{eq: conditions of g}),
$f_0 = \tilde{\varphi}$ and each $f_u$ is a holomorphic curve.
Since $|d\varphi|<1$, if we choose $\delta$ sufficiently small, all $f_u$ $(u\in B_\delta)$ become
Brody curves, i.e., $|df_u|\leq 1$.
Since $g$ is $\Lambda$-equivariant, the map $B_\delta\ni u\mapsto f_u\in \moduli(\affine P^N)$
is also $\Lambda$-equivariant.

If we choose $\delta>0$ sufficiently small, then there exists $K >0$ such that 
for all $u, v\in B_\delta$
\[ K^{-1}\supnorm{u + g(u) -v-g(v)} \leq \sup_{z\in \affine}\underline{d}(f_u(z), f_v(z))
\leq K\supnorm{u + g(u) -v-g(v)} \]
(this is a standard property of the exponential map) and we have
\[ \supnorm{g(u)-g(v)} \leq \frac{1}{2}\supnorm{u-v} .\]
Here we have used the condition (iii) in (\ref{eq: conditions of g}).
Hence
\[ \frac{1}{2}K^{-1}\supnorm{u-v} \leq \sup_{z\in \affine}\underline{d}(f_u(z), f_v(z))
\leq \frac{3}{2}K\supnorm{u-v} .\]
Then all the conditions in Proposition \ref{prop: main proposition} have been proved
(assuming Proposition \ref{prop: unobstructedness}).

%%%%%%%%%%%%%%%%%%%%%%%%%%%%%%%%%%%%%%%%%%%%%%%%%%%%%%%%%%%%%%%%%%%%%%%%%%%%%%%%%%%%%%%%%%%%%%%%%%%
%%%%%%%%%%%%%%%%%%%%%%%%%%%%%%%%%%%%%%%%%%%%%%%%%%%%%%%%%%%%%%%%%%%%%%%%%%%%%%%%%%%%%%%%%%%%%%%%%%%

\subsection{Proof of Proposition \ref{prop: unobstructedness}}
To begin with, we consider a perturbation of the Hermitian metric on $E$.
$E$ has the Hermitian metric $h$ induced by the Fubini-Study metric.
From Proposition \ref{lemma: perturbation of a Hermitian metric}, 
$\varphi^*T'\affine P^N$ admits a Hermitian metric which is ``positive" in the sense of 
Proposition \ref{lemma: perturbation of a Hermitian metric}.
Then, pulling back this metric to $E$, $E$ admits a $\Lambda$-invariant Hermitian 
metric $h'$ satisfying (\ref{eq: positivity of Hermitian metric})
for some $a>0$.
In this subsection we use this $h'$ as the Hermitian metric on $E$.
Note that the definitions of the H\"{o}lder spaces 
$\mathcal{C}^{k,\alpha}(E)$ and $\mathcal{C}^{k,\alpha}(\Omega^{0,1}(E))$
does not use the Hermitian metric. So they are independent of the choice of the
Hermitian metric. 
(The sup-norm $\supnorm{\cdot}$ depends on the Hermitian metric, but the sup-norms defined by 
$h$ and $h'$ are equivalent to each other.)

We prove Proposition \ref{prop: unobstructedness} by showing that
\begin{equation}\label{eq: dbar-dbar^*}
\dbar\dbar^*:\mathcal{C}^{2,\alpha}(\Omega^{0,1}(E))\to \mathcal{C}^{0,\alpha}(\Omega^{0,1}(E))
\end{equation}
is an isomorphism.
(Note that the Dolbeault operator $\dbar$ is independent of the Hermitian metric $h'$, but 
its formal adjoint $\dbar^*$ depends on $h'$.)
Then $Q := \dbar^* (\dbar\dbar^*)^{-1}$ gives a $\Lambda$-equivariant right inverse of $\dbar$.
The injectivity of (\ref{eq: dbar-dbar^*}) directly follows from
the $L^\infty$-estimate in Proposition \ref{prop: L^infty-estimate}.
So the problem is its surjectivity.
\begin{lemma}\label{lemma: existence of xi for compact-supported eta}
If $\eta\in \mathcal{C}^{0,\alpha}(\Omega^{0,1}(E))$ has a compact support, then 
there exists $\xi\in \mathcal{C}^{2,\alpha}(\Omega^{0,1}(E))$ satisfying 
$\dbar\dbar^* \xi = \eta$.
\end{lemma}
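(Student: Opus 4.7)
The plan is to construct $\xi$ by exhausting $\affine$ with disks, solving the Dirichlet problem on each, and passing to the limit using the a priori $L^{\infty}$-bound from Proposition \ref{prop: L^infty-estimate} together with interior Schauder estimates. Fix $R_0 > 0$ with $\mathrm{supp}(\eta) \subset \{|z| < R_0\}$ and, for each $R \geq R_0$, write $B_R := \{|z| < R\}$. By the Weitzenb\"ock formula (\ref{Weintzenbock formula}), the equation $\dbar\dbar^{*} \xi = \eta$ becomes the elliptic system $\tfrac{1}{2} \nabla^{*}\nabla \xi + R\xi = \eta$, whose associated bilinear form on $H^{1}_{0}(B_R, \Omega^{0,1}(E))$ is coercive by the positivity (\ref{eq: positivity of Hermitian metric}). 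Lax-Milgram therefore produces a unique weak solution $\xi_R \in H^{1}_{0}$ of the Dirichlet problem with $\xi_R|_{\partial B_R} = 0$, and since $\eta \in \mathcal{C}^{0,\alpha}$ and $\partial B_R$ is smooth, standard elliptic theory upgrades $\xi_R$ to $\mathcal{C}^{2,\alpha}(\bar B_R)$.

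Next I would adapt the maximum-principle argument of Proposition \ref{prop: L^infty-estimate} to the bounded domain. Pick $z_R \in B_R$ with $|\xi_R(z_R)| \geq \supnorm{\xi_R}/2$; the same computation as in that proof gives
\[
(-\Delta + 4a) |\xi_R|^{2} \;\leq\; 4 \supnorm{\xi_R}\, \supnorm{\eta} \;=:\; M \quad \text{on } B_R.
\]
Set $w(z) := (M/2a)\, w_{2a}(z - z_R)$. By Lemma \ref{lemma: property of Helmholtz equation}, $w \geq M/(2a) > 0$ and $(-\Delta + 2a) w = 0$, so $(-\Delta + 4a)(w - |\xi_R|^{2}) \geq 2a w - M \geq 0$ in $B_R$, while $w - |\xi_R|^{2} \geq 0$ on $\partial B_R$ because $\xi_R$ vanishes there. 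The minimum principle gives $w \geq |\xi_R|^{2}$ throughout $B_R$; evaluating at $z_R$ yields $\supnorm{\xi_R}^{2}/4 \leq M/(2a)$, i.e., $\supnorm{\xi_R} \leq (8/a) \supnorm{\eta}$ uniformly in $R$.

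With this uniform $L^{\infty}$-bound in hand I would then apply interior Schauder estimates chart-by-chart on the trivializing cover $\bar U_{n,\lambda} \subset U'_{n,\lambda} \subset U''_{n,\lambda}$ from Section 5. Because the metric, the connection, and all transition data are $\Lambda$-invariant, the Schauder constant is uniform in $(n,\lambda)$, and for $R$ large enough that $U''_{n,\lambda} \subset B_R$ one obtains
\[
\Dnorm{\xi_R}{2,\alpha}{\bar U_{n,\lambda}} \;\leq\; C \bigl( \supnorm{\xi_R} + \Dnorm{\eta}{0,\alpha}{\bar U''_{n,\lambda}} \bigr) \;\leq\; C' \bigl( \supnorm{\eta} + \Dnorm{\eta}{0,\alpha}{\Omega^{0,1}(E)} \bigr),
\]
with $C, C'$ independent of $R, n, \lambda$. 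Taking the supremum over $(n, \lambda)$ bounds $\Dnorm{\xi_R}{2,\alpha}{\Omega^{0,1}(E)}$ uniformly in $R$, so an Arzel\`a-Ascoli and diagonal extraction yields a subsequence $\xi_{R_k}$ converging in $\mathcal{C}^{2}_{\mathrm{loc}}$ to some $\xi \in \mathcal{C}^{2,\alpha}(\Omega^{0,1}(E))$ solving $\dbar\dbar^{*}\xi = \eta$ on all of $\affine$.

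The main technical obstacle I anticipate is precisely the uniform $L^{\infty}$-bound on the $\xi_R$: Proposition \ref{prop: L^infty-estimate} was stated under the global hypothesis $\supnorm{\xi} < \infty$ and used the blow-up of $w_{\lambda}$ at infinity to pin down the comparison function, whereas on $B_R$ one must instead use that $w \geq M/(2a) > 0$ dominates $|\xi_R|^{2} = 0$ on $\partial B_R$. This uses the Dirichlet condition in an essential way, and it is what keeps the bound independent of $R$. The other subtle point is ensuring that the Schauder constants are genuinely uniform across the infinitely many coordinate patches, which is guaranteed by the $\Lambda$-periodic construction of the cover $\{U''_{n,\lambda}\}$ in Section 5.
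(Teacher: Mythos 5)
Your proof is correct, but it takes a genuinely different route from the paper's. The paper stays on all of $\affine$: since $\eta$ has compact support it defines a bounded linear functional on $L^2_1(\Omega^{0,1}(E))$, and the Weitzenb\"ock identity shows that $(\dbar^*\cdot,\dbar^*\cdot)_{L^2}$ is an equivalent inner product on $L^2_1$, so Riesz representation produces a global weak solution $\xi\in L^2_1$ at once; regularity is then upgraded patchwise via $L^2$ elliptic estimates, the Sobolev embedding $L^2_2\hookrightarrow\mathcal{C}^0$, and interior Schauder estimates (with constants uniform in $\lambda$ by $\Lambda$-periodicity). You instead exhaust $\affine$ by disks, solve Dirichlet problems by Lax--Milgram, and get the crucial $R$-independent $L^\infty$-bound by re-running the comparison argument of Proposition \ref{prop: L^infty-estimate} on $B_R$, where the blow-up of $w_\lambda$ at infinity is replaced by the zero Dirichlet data on $\partial B_R$. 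Both arguments are sound. What the paper's route buys is a short existence proof by pure Hilbert-space theory; what yours buys is that it never touches the $L^2$ framework (no Riesz representation, no $L^2_2\hookrightarrow\mathcal{C}^0$), staying entirely within the ``$L^\infty$ plus Schauder'' philosophy of the paper, and -- notably -- compact support of $\eta$ is never actually used in your argument, so it proves directly that (\ref{eq: dbar-dbar^*}) is surjective for all $\eta\in\mathcal{C}^{0,\alpha}(\Omega^{0,1}(E))$, merging Lemma \ref{lemma: existence of xi for compact-supported eta} with the subsequent cutoff-and-Arzel\`a--Ascoli argument into a single step. One small point to be explicit about: after Lax--Milgram gives $\xi_R\in H^1_0(B_R)$ as a weak solution of the Weitzenb\"ock form of the equation, global $\mathcal{C}^{2,\alpha}(\bar B_R)$ regularity on the smooth disk lets you invoke the Weitzenb\"ock formula (\ref{Weintzenbock formula}) pointwise to recover $\dbar\dbar^*\xi_R=\eta$ in the classical sense; this is needed so that the diagonal limit really satisfies $\dbar\dbar^*\xi=\eta$.
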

\begin{proof}
We set 
\[ L^2_1(\Omega^{0,1}(E)):= \{\xi\in L^2(\Omega^{0,1}(E))|\, \nabla \xi \in L^2\},\]
where $\nabla \xi$ is the distributional derivative of $\xi$.
(The $L^2$-norm and the $L^2$-space are defined by using the Hermitian metric $h'$.)
Let $\xi\in \Omega^{0,1}(E)$ be a compact-supported smooth section.
From the Weitzenb\"{o}ck formula (\ref{Weintzenbock formula}),
\begin{equation*}
 \begin{split}
 \norm{\dbar^*\xi}_{L^2}^2 &= (\dbar\dbar^*\xi, \xi)_{L^2} = 
 (\frac{1}{2}\nabla^*\nabla\xi + R\xi, \xi)_{L^2},\\
 &\geq \frac{1}{2}\norm{\nabla\xi}_{L^2}^2 + a\norm{\xi}_{L^2}^2 .
 \end{split}
\end{equation*}
Therefore for any $\xi \in L^2_1(\Omega^{0,1}(E))$ 
\[ \norm{\dbar^*\xi}_{L^2}^2\geq \frac{1}{2}\norm{\nabla\xi}_{L^2}^2 + a\norm{\xi}_{L^2}^2 .\]
This means that the inner-product $(\dbar^*\xi_1, \dbar^*\xi_2)_{L^2}$ 
$(\xi_1,\xi_2\in L^2_1(\Omega^{0,1}(E)))$
is equivalent to the natural inner-product $(\xi_1, \xi_2)_{L^2_1}:=
(\nabla \xi_1, \nabla \xi_2)_{L^2} + (\xi_1, \xi_2)_{L^2}$ 
on $L^2_1(\Omega^{0,1}(E))$.

$\eta$ defines a bounded functional $(\cdot, \eta)_{L^2}: L^2_1(\Omega^{0,1}(E))\to \affine$.
From the Riesz representation theorem, there (uniquely) exists $\xi\in L^2_1(\Omega^{0,1}(E))$
satisfying $(\dbar^*\phi, \dbar^*\xi)_{L^2} = (\phi, \eta)_{L^2}$ for all 
$\phi\in L^2_1(\Omega^{0,1}(E))$ and
\begin{equation}\label{eq: L^2_1 estimate}
\norm{\xi}_{L^2_1}:= (\xi, \xi)_{L^2_1}^{1/2} \leq \mathrm{const}\cdot \norm{\eta}_{L^2} .
\end{equation}
In particular, $\dbar\dbar^* \xi = \eta$ in the sense of distribution.
(The above is a standard argument in the ``$L^2$-theory".)

We want to show $\xi\in \mathcal{C}^{2,\alpha}(\Omega^{0,1}(E))$.
Remember the open covering 
$\affine = \bigcup_{n,\lambda}U_{n,\lambda} = \bigcup_{n,\lambda} U'_{n,\lambda}
= \bigcup_{n,\lambda}U''_{n,\lambda}$
$(n=1,\cdots, m, \, \lambda\in \Lambda)$
used in the definition of the H\"{o}lder spaces.
Each $\xi|_{U''_{n,\lambda}}$ can be seen as a 
vector-valued function.
From the Sobolev embedding $L^2_2\hookrightarrow \mathcal{C}^0$,
the elliptic regularity (see Gilbarg-Trudinger \cite[Chapter 8]{Gilbarg-Trudinger}) and 
(\ref{eq: L^2_1 estimate}),
\begin{equation*}
 \begin{split}
 \supnorm{\xi|_{U_{n,\lambda}}} &\leq \const_n \cdot \norm{\xi|_{U_{n,\lambda}}}_{L^2_2}
 \leq \const'_n \left(\norm{\xi|_{U'_{n,\lambda}}}_{L^2_1} + 
 \norm{\eta|_{U'_{n,\lambda}}}_{L^2}\right) \\
 &\leq \const''_n \cdot \norm{\eta}_{L^2},
  \end{split}
\end{equation*}
where $\const_n$, $\const'_n$ and $\const''_n$ are positive constants which depend on 
$n = 1, \cdots, m$.
The point is that they are independent of $\lambda\in \Lambda$; 
this is due to the $\Lambda$-symmetry of the equation.
Then 
\[ \supnorm{\xi}\leq \const \cdot \norm{\eta}_{L^2}  .\]
From the Schauder interior estimate (see Gilbarg-Trudinger \cite[Chapter 6]{Gilbarg-Trudinger}),
\[ \Dnorm{\xi}{2,\alpha}{\bar{U}_{n,\lambda}} \leq 
\const_n \left( \supnorm{\xi} + \Dnorm{\eta}{0,\alpha}{\bar{U}'_{n,\lambda}} \right)  
\leq \const\, (\norm{\eta}_{L^2} + \Dnorm{\eta}{0,\alpha}{\Omega^{0,1}(E)})  .\]
Here we have used the following fact (which can be easily checked):
\begin{equation}\label{eq: change of the open sets}
 \sup_{n,\lambda} \Dnorm{\eta}{0,\alpha}{\bar{U}'_{n,\lambda}} \leq 
 \const \cdot \Dnorm{\eta}{0,\alpha}{\Omega^{0,1}(E)} \, 
 (= \const \cdot\sup_{n,\lambda} \Dnorm{\eta}{0,\alpha}{\bar{U}_{n,\lambda}})
\end{equation}
Thus $\Dnorm{\xi}{2,\alpha}{\Omega^{0,1}(E)} <\infty$ and 
$\xi \in \mathcal{C}^{2,\alpha}(\Omega^{0,1}(E))$.
\end{proof}
Then we can prove that (\ref{eq: dbar-dbar^*}) is surjective (and hence isomorphic).
Take an arbitrary $\eta\in \mathcal{C}^{0,\alpha}(\Omega^{0,1}(E))$.
Let $\phi_k$ $(k\geq 1)$ be cut-off functions on the plane $\affine$ such that 
$0\leq \phi_k\leq 1$, $\phi_k(z) =1 $ for $|z|\leq k$ and $\phi_k(z) = 0$ for $|z|\geq k+1$.
Set $\eta_k := \phi_k \eta$.
From Lemma \ref{lemma: existence of xi for compact-supported eta}, there exists
$\xi_k\in \mathcal{C}^{2,\alpha}(\Omega^{0,1}(E))$ satisfying 
$\dbar\dbar^* \xi_k = \eta_k$.
From the $L^\infty$-estimate in Proposition \ref{prop: L^infty-estimate},
\begin{equation}\label{eq: boundedness of l^infty norm}
\supnorm{\xi_k}\leq \const \cdot \supnorm{\eta_k} \leq \const\cdot \supnorm{\eta} .
\end{equation}
Using the Schauder interior estimate on each $U'_{n,\lambda}$, we get 
\[ \Dnorm{\xi_k}{2,\alpha}{\bar{U}_{n,\lambda}} 
\leq \const_n \left( \Dnorm{\eta_k}{0,\alpha}{\bar{U}'_{n,\lambda}} + \supnorm{\xi_k}\right)
\leq \const'_n \left( \Dnorm{\eta_k}{0,\alpha}{\bar{U}'_{n,\lambda}} + \supnorm{\eta}\right).\]
Since $\eta_k|_{U'_{n,\lambda}} =\eta|_{U'_{n,\lambda}}$ for $k\gg 1$ (for each fixed $(n,\lambda)$),
$\{\xi_k|_{U_{n,\lambda}}\}_{k\geq 1}$ is a bounded sequence in 
$\mathcal{C}^{2,\alpha}(\bar{U}_{n, \lambda})$.
Hence, if we choose a subsequence, $\{\xi_k|_{U_{n,\lambda}}\}_{k\geq 1}$ 
becomes a convergent sequence in $\mathcal{C}^2(\bar{U}_{n,\lambda})$ 
(by Arzela-Ascoli's theorem).
Therefore (by using the diagonal argument) 
there exists $\xi\in \Omega^{0,1}(E)$ of class $\mathcal{C}^2$ such that 
$\{\xi_k|_{U_{n,\lambda}}\}_{k\geq 1}$ converges to $\xi|_{U_{n,\lambda}}$ in $\mathcal{C}^2(\bar{U}_{n,\lambda})$ 
for each $(n, \lambda)$.
Since $\dbar\dbar^* \xi_k = \eta_k$, we have $\dbar\dbar^* \xi = \eta$.
From (\ref{eq: boundedness of l^infty norm}), 
$\supnorm{\xi} \leq \const\cdot \supnorm{\eta} <\infty$.
Using the Schauder interior estimate, we get 
\[ \Dnorm{\xi}{2,\alpha}{\bar{U}_{n,\lambda}}\leq 
\const_n \left(\Dnorm{\eta}{0,\alpha}{\bar{U}'_{n,\lambda}} + \supnorm{\xi}\right) 
\leq \const'_n \left(\Dnorm{\eta}{0,\alpha}{\bar{U}'_{n,\lambda}} + \supnorm{\eta}\right).\]
From (\ref{eq: change of the open sets}),
\[ \Dnorm{\xi}{2,\alpha}{\Omega^{0,1}(E)} \leq 
\const\cdot \Dnorm{\eta}{0,\alpha}{\Omega^{0,1}(E)} < \infty .\]
Therefore we get $\xi\in \mathcal{C}^{2,\alpha}(\Omega^{0,1}(E))$ satisfying 
$\dbar\dbar^*\xi = \eta$. 
Then (\ref{eq: dbar-dbar^*}) is an isomorphism, and the proof of 
Proposition \ref{prop: unobstructedness} is finished
(and hence the proof of Theorem \ref{main theorem} is completed).

%%%%%%%%%%%%%%%%%%%%%%%%%%%%%%%%%%%%%%%%%%%%%%%%%%%%%%%%%%%%%%%%%%%%%%%%%%%%%%%%%%%%%%%%%%%%%%%%%%%%
%%%%%%%%%%%%%%%%%%%%%%%%%%%%%%%%%%%%%%%%%%%%%%%%%%%%%%%%%%%%%%%%%%%%%%%%%%%%%%%%%%%%%%%%%%%%%%%%%%%%

\section{Remark on Gromov's conjecture on rational curves and mean dimension}
Gromov gives the following (very beautiful) conjecture in \cite[p. 329]{Gromov}.
\begin{conjecture}\label{Gromov's conjecture}
Let $X\subset \affine P^N$ be a projective manifold, and 
$\moduli(X)$ the space of Brody curves in $X$.
Then $\dim(\moduli(X):\affine) >0$ if and only if 
$X$ contains a rational curve.
\end{conjecture}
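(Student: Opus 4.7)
The plan is to address the two implications separately, expecting one to be accessible via the deformation theory built in this paper, and the other to lie essentially at the depth of the Green-Griffiths-Lang conjectures. I would write them in order of difficulty.

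For the easy direction, namely that a rational curve in $X$ implies $\dim(\moduli(X):\affine)>0$, I would fix a non-constant holomorphic map $\varphi:\affine P^1\to X$, chosen as the normalization of a rational curve in $X$ so that $\varphi$ is generically injective, and set $M:=\sup_{w\in \affine P^1}|d\varphi|(w)$. For every Brody curve $g:\affine\to \affine P^1$ the rescaled composition $\Psi(g)(z):=\varphi(g(z/M))$ is a Brody curve in $X$, and $\Psi:\moduli(\affine P^1)\to \moduli(X)$ is continuous and equivariant with respect to the $\affine$-action up to the scaling $z\mapsto z/M$. Theorem \ref{theorem in N=1} gives $\dim(\moduli(\affine P^1):\affine)>0$, so what remains is to control the fibers of $\Psi$: by restricting to the $\affine$-invariant closed subset of $\moduli(\affine P^1)$ whose curves avoid the finitely many ramification/identification points of $\varphi$ one makes $\Psi$ injective. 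A standard widim argument, using Proposition \ref{prop: widim inequality} exactly as in Section \ref{Section: Proof of the main Theorem}, then transports the positive mean dimension across $\Psi$ and yields $\dim(\moduli(X):\affine)\geq M^{-2}\dim(\moduli(\affine P^1):\affine)>0$.

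For the hard direction, namely that the absence of rational curves in $X$ forces $\dim(\moduli(X):\affine)=0$, the only viable route I can outline is a Nevanlinna-theoretic one. If $X$ contains no rational curves, one expects (after Lang) that the Zariski closure of the image of any entire curve $f:\affine\to X$ is a subvariety of log general type, so that a Second Main Theorem with small error term applies to $f$. The proposed step would be to convert this into a quantitative rigidity statement: any two Brody curves in $X$ whose restrictions to a disc of radius $R$ are within $\varepsilon$ of each other in the sup-distance $\underline{d}$ must in fact agree on a disc of radius $R-o(R)$. Such a rigidity would force $\Widim_\varepsilon(\moduli(X),d_R)=o(R^2)$ for every $\varepsilon>0$ and hence $\dim(\moduli(X):\affine)=0$.

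The main obstacle is clearly this second direction. In full generality it is of the same depth as, or stronger than, the Green-Griffiths and Lang conjectures, and there is no realistic hope of a complete proof without major advances in value-distribution theory. A reasonable first test case would be abelian varieties, which carry many non-constant Brody curves yet contain no rational curves; even there the machinery of this paper points the wrong way, since the deformation theory of Section 5 is designed to \emph{lower bound} mean dimension by producing rich families of Brody curves, whereas the task now is the opposite, namely to show that no such rich family can exist. A genuinely new analytic or algebro-geometric input, perhaps combining Brody's reparametrization lemma with bend-and-break to produce rational curves out of large-mean-dimensional families of entire curves, seems necessary.
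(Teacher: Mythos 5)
The statement you were asked about is labelled in the paper as \emph{Gromov's conjecture} (Conjecture~\ref{Gromov's conjecture}); the paper offers no proof of it, and indeed explicitly states ``the `if' part is easy and the problem is the `only if' part.'' The only substantive content the paper attaches to this conjecture is a \emph{cautionary counterexample}: Proposition~\ref{prop: counter-example} exhibits a compact Hermitian (non-K\"ahler) threefold $X = F/\Gamma$, built from a rank-two bundle over an elliptic curve, which has no rational curves yet has $\dim(\moduli(X):\affine) > 0$. In other words, the paper is not trying to prove the conjecture; it is showing that the projectivity (or K\"ahlerity) hypothesis cannot be dropped, and speculating that the correct formulation may involve elliptic curves rather than rational ones.

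Your proposal correctly identifies the asymmetry of the two directions and correctly places the ``only if'' direction at the depth of Green--Griffiths--Lang. That diagnosis is sound, and your candid admission that there is ``no realistic hope of a complete proof'' is accurate. However, this means you have \emph{not} produced a proof: the second half of your argument is a research programme, not a deduction. Testing on abelian varieties, as you propose, is reasonable but already hard, and ``combining Brody reparametrization with bend-and-break'' is exactly what fails in the non-K\"ahler setting, as the paper's Proposition~\ref{prop: counter-example} demonstrates (it is drawn from Koll\'ar--Mori's known counterexample to bend-and-break over $\affine$).

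There is also a concrete gap in your sketch of the ``easy'' direction. You propose restricting to the $\affine$-invariant subset of $\moduli(\affine P^1)$ consisting of curves that avoid the finitely many ramification and self-intersection points of the normalization $\varphi : \affine P^1 \to X$. By Picard's theorem a non-constant entire curve into $\affine P^1$ omits at most two points, so if $\varphi$ has three or more bad points this restricted subset contains only constants and has mean dimension zero; the argument collapses. The fix is actually simpler: for \emph{non-constant} $g, g' \in \moduli(\affine P^1)$, the images $g(\affine)$ and $g'(\affine)$ each meet the open set where $\varphi$ is a local biholomorphism onto its image (again by Picard), so $\varphi \circ g = \varphi \circ g'$ forces $g = g'$ by the identity theorem; thus $\Psi$ is already injective away from the constants, and no avoidance hypothesis is needed.

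In sum, your proposal does not establish the conjecture, nor does the paper; the ``if'' direction is recoverable with a small repair as above, while the ``only if'' direction remains genuinely open and is, if anything, called into question for non-projective $X$ by the very construction in Section~6 of the paper.
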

The ``if" part is easy and the problem is the ``only if" part.
The purpose of this section is to show the following proposition:
\begin{proposition}\label{prop: counter-example}
There exists a compact Hermitian manifold $X$ such that 
$X$ contains no rational curve and satisfies 
$\dim(\moduli(X):\affine) >0$.
Here $\moduli(X)$ is the space of holomorphic maps $f:\affine \to X$ satisfying 
\[ \sup_{z\in \affine}|df|(z) := \sup_{z\in \affine}\sqrt{2}|df(\partial/\partial z)| \leq 1 .\]
\end{proposition}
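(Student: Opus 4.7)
The plan is to take $X$ to be the Hopf manifold $H^n := (\affine^n \setminus \{0\}) / \langle z \mapsto 2z \rangle$ for some $n \geq 2$, endowed with the Hermitian metric descending from $\omega_0 := \sum_{i=1}^n dz_i \otimes d\bar{z}_i / |z|^2$ on its universal cover $\affine^n \setminus \{0\}$.  First I would verify that $X$ admits no rational curve: $\affine P^1$ is simply connected and $\affine^n \setminus \{0\}$ is simply connected for $n \geq 2$, so any holomorphic map $\affine P^1 \to X$ lifts through the universal cover to $\affine P^1 \to \affine^n \setminus \{0\} \subset \affine^n$, and each coordinate of this lift is a holomorphic function on the compact manifold $\affine P^1$, hence a constant by Liouville.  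The original map is therefore constant.

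For the mean-dimension bound, I would exploit the Hopf fibration $\pi : X \to \affine P^{n-1}$, a holomorphic submersion with elliptic-curve fibers, under which the metric splits orthogonally as $\omega_X = \pi^* \omega_{FS} + \omega_{\mathrm{vert}}$.  Fix a non-constant elliptic Brody curve $\bar\varphi : \affine/\Lambda \to \affine P^{n-1}$ with sufficiently small Brody constant (by rescaling $z \mapsto cz$ for $c < 1$ as in the proof of Theorem \ref{main theorem}), and let $\tilde\varphi : \affine \to X$ be a lift through the cover; such a lift exists because $\affine$ is contractible, but it is only $\Lambda$-equivariant up to the cocycle of the non-trivial elliptic bundle $X \to \affine P^{n-1}$.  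The pullback $\tilde\varphi^* T'X$ sits in a short exact sequence with a trivial vertical line bundle as subbundle and $\bar\varphi^* T'\affine P^{n-1}$ (of positive degree $n \deg \bar\varphi$) as quotient on $\affine / \Lambda$, so that $\deg \tilde\varphi^* T'X = n \deg \bar\varphi > 0$.  Running the Riemann--Roch count and the $L^\infty$ deformation machinery of Section 5---with the Hermitian metric on $\tilde\varphi^* T'X$ perturbed as in Lemma \ref{lemma: perturbation of a Hermitian metric} to make it strictly positive---would yield a Banach space $V_n$ of cocycle-twisted $n\Lambda$-invariant bounded holomorphic sections of real dimension at least $2 n^3 \deg \bar\varphi$.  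The $\Widim$ argument of Section 3 then gives
\[
\dim(\moduli(X) : \affine) \;\geq\; \frac{2 n \deg \bar\varphi}{|\affine / \Lambda|} \;>\; 0.
\]

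The main technical obstacle is the quasi-periodicity of $\tilde\varphi$: because the elliptic bundle $X \to \affine P^{n-1}$ has non-trivial cocycle, the lift $\tilde\varphi$ is only $\Lambda$-equivariant modulo this cocycle, so the definitions of the Banach spaces $V$, $V_n$ and the $\Lambda$-equivariance clauses in the analogues of Propositions \ref{prop: unobstructedness} and \ref{prop: main proposition} must be twisted by it, and one must transplant Lemma \ref{lemma: perturbation of a Hermitian metric} to the ambient geometry of $H^n$ rather than $\affine P^N$.  The delicate point is then to verify that the constants $\delta$ and $C$ of the deformation estimate remain uniform in $n$ in this twisted setting---precisely the uniformity that underpins the proof of Theorem \ref{main theorem}---after which the $\Widim$ count of Section 3 goes through essentially verbatim.
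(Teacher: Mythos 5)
Your no-rational-curve argument for the Hopf manifold is correct: lift through the simply connected universal cover $\affine^n\setminus\{0\}\ (n\geq 2)$ and apply Liouville coordinate-wise. But the mean-dimension half of your argument diverges from the paper's construction in a way that creates a gap you cannot fill as outlined.

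The paper's $X$ is a rank-two complex torus bundle $X=F/\Gamma$ over an elliptic curve $\affine/\Lambda$, where $F=L\oplus L$ with $\deg L\geq 2$. The decisive feature of this choice is \emph{linearity}: every bounded holomorphic section $u$ of $E:=\pi^*F$ over $\affine$ projects directly to a Brody curve $p\circ u:\affine\to X$, and $u\mapsto p\circ u$ is already a $\Lambda$-equivariant continuous embedding of $B_\delta(V)$ into $\moduli(X)$. No exponential map, no implicit function theorem, no $\dbar\dbar^*$-inverse, no $L^\infty$ Weitzenb\"ock estimate is needed; one simply counts $\dim H^0(\affine/n\Lambda,\mathcal{O}(\pi_n^*F))$ by Riemann--Roch and feeds it into the $\Widim$ count of Section 3. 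The paper flags this expressly: this case does not use the results of Sections 4--5 at all.

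Your Hopf-manifold route discards that linearity and forces you to run the full nonlinear deformation machinery in a setting strictly harder than the one the paper proves. Two concrete problems. First, you need a $\Lambda$-periodic Brody base-curve $\tilde\varphi:\affine\to X$ before the Section 3 count can even begin, but a lift $\tilde\varphi$ of $\bar\varphi\circ\pi$ through $\pi_X:X\to\affine P^{n-1}$ need be neither $\Lambda$-periodic nor of bounded vertical derivative. The pulled-back elliptic-curve bundle $\bar\varphi^*X\to\affine/\Lambda$ is an $E'$-torsor with $E'=\affine^*/\langle 2\rangle$; a holomorphic section (equivalently a periodic lift) exists only if this torsor is trivial, which you have not shown and which is not automatic since $\deg\bar\varphi^*\mathcal{O}(-1)<0$. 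Second, even if you work with a quasi-periodic $\tilde\varphi$, you must re-prove Lemma \ref{lemma: perturbation of a Hermitian metric}, Proposition \ref{prop: L^infty-estimate}, Proposition \ref{prop: unobstructedness} and the crucial $n$-uniformity of the constants in a twisted framework that the paper never establishes. You call this ``the main technical obstacle'' and then assert the $\Widim$ count goes through ``essentially verbatim''---that is exactly the content that would need to be supplied. Since the paper's torus-bundle example avoids all of this, I would replace the Hopf construction with it, or at the very least confront head-on which elliptic curves in $X$ have positive degree against $T'X$ and admit $\Lambda$-periodic Brody lifts (the natural candidates, the fibers of $\pi_X$, have degree $0$, so they buy you nothing).
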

This shows that the projectivity (or the K\"{a}hler condition) is essential 
in Conjecture \ref{Gromov's conjecture}.
(Actually I feel that the following argument suggests that the true 
conjecture might be something like the following; if $\dim(\moduli(X):\affine)>0$ then there are 
``many" elliptic curves in $X$ (cf. Gromov \cite[p. 330, EXAMPLE]{Gromov}).)
We prove Proposition \ref{prop: counter-example} by using an argument similar to that of  
Section \ref{Section: Proof of the main Theorem}.
(But this case is much easier than the proof of Theorem \ref{main theorem};
We don't need a serious analytic argument. 
In particular we don't use the results in Section 4,5.
Perhaps we can also prove Proposition \ref{prop: counter-example} by applying 
the argument in Gromov \cite[pp. 385-388]{Gromov} to the following construction.)

The compact Hermitian manifold $X$ constructed below is actually
known as a counter-example 
of ``bend-and-break" technique for general complex manifolds 
(see Koll\'{a}r-Mori \cite[Example 1.8]{Kollar-Mori}).
We follow the description of \cite[Example 1.8]{Kollar-Mori}.

Let $\affine/\Lambda$ be an elliptic curve
($\Lambda$ is a lattice in $\affine$).
Let $L$ be a holomorphic line bundle of $\deg \geq 2$ over $\affine/\Lambda$ such that 
there exists two holomorphic sections $s, t$ of $L$ satisfying 
$\{z\in \affine/\Lambda|\, s(z) = t(z) = 0\} =\emptyset$.
Set $F := L\oplus L$. The vector bundle $F$ has the following four sections:
\[ (s, t), \quad (\sqrt{-1}s, -\sqrt{-1}t), \quad (t, -s), \quad (\sqrt{-1}t, \sqrt{-1}s) .\]
These are $\mathbb{R}$-linearly independent all over $\affine/\Lambda$.
(Therefore $F$ becomes a product bundle as a real vector bundle.)
Hence we can define a lattice bundle $\Gamma \subset F$ by
\[ \Gamma := \{ x_1 (s, t) + x_2 (\sqrt{-1}s, -\sqrt{-1}t) +
 x_3 (t, -s) + x_4 (\sqrt{-1}t, \sqrt{-1}s) |\, x_1, x_2, x_3, x_4 \in \mathbb{Z}\} .\]
We define a compact complex threefold $X$ by $X := F/\Gamma$.
(Topologically $X= T^2 \times T^4 = T^6$.)
Obviously $X$ contains no rational curve.
But $X$ can contain lots of Brody curves as we will see below.

We give a Hermitian metric (of a complex vector bundle) to $F$ and a Hermitian 
metric (of a complex manifold) to $X$.
Let $\pi :\affine \to \affine /\Lambda$ be the natural projection and
$E:=\pi^* F$ the pull-back of $F$ by $\pi$.
$E$ is equipped with the $\Lambda$-invariant Hermitian metric induced by the metric on $F$.
Let $V$ be the (Banach) space of bounded holomorphic sections of $E$ with 
the sup-norm $\supnorm{\cdot}$, and set $B_\delta(V) := \{u\in V| \supnorm{u}\leq \delta\}$
for $\delta >0$.
Let $p:F\to X$ be the natural projection.
If we choose $\delta$ sufficiently small and consider some scale-change of the 
Hermitian metric of $X$, then, for any $u\in B_\delta(V)$, 
$p\circ u:\affine \to X$ belongs to $\moduli(X)$ and the map 
$\Phi: B_\delta(V) \ni u \mapsto p\circ u\in \moduli(X)$ becomes injective.
(Here we consider $u\in B_\delta(V)$ as a map from $\affine$ to $F$.)
We define a distance $d(\cdot, \cdot)$ on $B_\delta(V)$ by 
\[ d(u, v) := \sum_{n \geq 1} 2^{-n} \sup_{|z|\leq n} |u(z)-v(z)| 
\quad \text{for any $u, v\in B_\delta(V)$} .\]
We consider the topology defined by this distance on $B_\delta(V)$.
Then $B_\delta(V)$ becomes compact, and $\Phi:B_\delta(V)\to \moduli(X)$ becomes a
$\Lambda$-equivariant continuous embedding (here we consider the compact-open 
topology on $\moduli(X)$).
Hence $\dim(\moduli(X): \Lambda) \geq \dim(B_\delta(V):\Lambda)$.
Let $\pi_n:\affine/n\Lambda \to \affine /\Lambda$ be the natural 
$n^2$-fold covering.
Then the argument in Section 3 shows
\[ \dim(B_\delta(V) :\Lambda) \geq \lim_{n\to \infty} \frac{1}{n^2}
\dim H^{0}(\affine/n\Lambda :\mathcal{O}(\pi_n^*F)) = 2\deg(F) >0.\]
(This is an inequality of the type ``mean-dimension $\geq$ residual-dimension".)
Therefore 
\[ \dim(\moduli(X) :\affine) = \dim(\moduli(X):\Lambda)/|\affine/\Lambda| >0 .\]

\begin{remark}
The above $X$ does not admit a K\"{a}hler metric.
In fact the space of holomorphic one-forms in $X$ is (complex) one-dimensional.
Since the first Betti number of $X =T^6$ is $6$, the Hodge theory implies that 
there is no K\"{a}hler metric on $X$.
\end{remark}

%%%%%%%%%%%%%%%%%%%%%%%%%%%%%%%%%%%%%%%%%%%%%%%%%%%%%%%%%%%%%%%%%%%%%%%%%%%%%%%%%%%%%%%%%%%%%%%%%%
%%%%%%%%%%%%%%%%%%%%%%%%%%%%%%%%%%%%%%%%%%%%%%%%%%%%%%%%%%%%%%%%%%%%%%%%%%%%%%%%%%%%%%%%%%%%%%%%%%

\appendix
\section{Proof of Proposition \ref{prop: widim inequality}}
Gromov \cite[p. 333]{Gromov} proved Proposition \ref{prop: widim inequality} by using 
the notion ``filling radius". (Filling radius is a notion introduced in his celebrated 
paper \cite{Gromov-filling}.)
Our following proof is a variant of the argument of Lindenstrauss-Weiss 
\cite[Lemma 3.2]{Lindenstrauss-Weiss}.

It is enough to prove that for the unit ball 
$B:=\{x\in V|\norm{x}\leq 1\}$ we have 
\[ \Widim_\varepsilon (B,d) = \dim V = n \quad \text{for $\varepsilon <1$} .\]
Suppose there exists $\varepsilon <1$ such that $\Widim_\varepsilon(B,d) \leq n-1$.
Then there is a finite open covering $\{U_i\}_{i\in I}$ of $B$ such that 
$\delta:= \max_{i\in I} \Diam\, U_i <1$ and its order is $\leq n-1$, i.e., 
$U_{i_1}\cap U_{i_2}\cap \cdots\cap U_{i_{n+1}} = \emptyset$ for distinct 
$i_1, i_2, \cdots, i_{n+1} \in I$.

Let $\{\phi_i\}_{i\in I}$ be a partition of unity on $B$ satisfying 
$\mathrm{supp}\, \phi_i \subset U_i$. 
Take an arbitrary point $p_i$ in $U_i$.
We define a map $f:B\to B$ by $f(x):= -\sum_{i\in I}\phi_i(x)\cdot p_i$.
For any $x\in B$ we have 
\begin{equation}\label{eq: a certain norm inequality}
 \norm{f(x) + x} = \norm{\sum_{i\in I}\phi_i(x)(x-p_i)} \leq \delta\sum_{i\in I}\phi_i = \delta.
\end{equation}

For each $x\in B$ we have $\sharp\{i\in I|\phi_i(x) \neq 0\}\leq n$.
Therefore $f(B)$ is contained in a union of at most $n-1$ dimensional polyhedrons.
In particular $f(B)$ does not contain an inner point.
Hence there exists $a\in B$ such that $a\notin f(B)$ and $\norm{a} \leq 1-\delta$.
Then we can define $g:B\to \partial B$ by $g(x):= (f(x)-a)/\norm{f(x)-a}$.
$g$ does not have a fixed point. In fact if $g(x) = x$, then $x\in \partial B$ and
$f(x)-a = x\norm{f(x)-a}$. Then $f(x) + x -a = x(1+\norm{f(x) -a})$ and
$\norm{f(x) + x-a} = 1 + \norm{f(x)-a} >1$. 
From (\ref{eq: a certain norm inequality}),
\[ 1 < \norm{f(x)+x-a} \leq \norm{f(x) +x} + \norm{a} \leq \delta + \norm{a} \leq 1.\]
This is a contradiction.
Therefore $g$ does not have a fixed point, and this contradicts the Brouwer 
fixed-point theorem.

\vspace{10mm}

\address{ Masaki Tsukamoto \endgraf
Department of Mathematics, Faculty of Science \endgraf
Kyoto University \endgraf
Kyoto 606-8502 \endgraf
Japan
}

\textit{E-mail address}: \texttt{tukamoto@math.kyoto-u.ac.jp}

\end{document}